\crefname{hypothesis}{Hypothesis}{Hypotheses}
\Crefname{ALC@unique}{Line}{Lines}
\newcommand{\R}{\mathbb{R}}
\newcommand{\V}[2][]{\bm{#1{\mathbf{\MakeLowercase{#2}}}}} 		% vector
\newcommand{\kr}{\odot}
\newcommand{\kron}{\otimes}
\newcommand{\bmat}[1]{\begin{bmatrix} #1 \end{bmatrix}}
\newcommand{\col}[1]{\texttt{col}(#1)}
\newcommand{\size}[1]{\texttt{size}(#1)}
\algnewcommand{\IfThenElse}[3]{\State \algorithmicif\ #1\ \algorithmicthen\ #2\ \algorithmicelse\ #3} % single line: \IfThenElse{<if>}{<then>}{<else>}
\algnewcommand{\IfThen}[2]{\State \algorithmicif\ #1\ \algorithmicthen\ #2} % single line: \IfThen{<if>}{<then>}
\newcommand{\h}[1]{\mathcal{H} \left(#1\right)}
\renewcommand{\v}[1]{\mathcal{V}\left(#1\right)}
\newsavebox{\@brx}
\newcommand{\llangle}[1][]{\savebox{\@brx}{\(\m@th{#1\langle}\)}%
  \mathopen{\copy\@brx\kern-0.5\wd\@brx\usebox{\@brx}}}
\newcommand{\rrangle}[1][]{\savebox{\@brx}{\(\m@th{#1\rangle}\)}%
  \mathclose{\copy\@brx\kern-0.5\wd\@brx\usebox{\@brx}}}
\title{Adaptive Randomized Tensor Train Rounding using Khatri-Rao Products\thanks{This work was funded in part by the Department of Energy, through awards DE-SC0025262 and DE-SC0025394; the European Research Council (ERC) under the European Union’s Horizon 2020 research and innovation program (grant agreement No 810367); the National Science Foundation under grant CCF-1942892; and the Ada Lovelace Centre Programme at the Scientific Computing Department, STFC.}}
\author{Hussam Al Daas\thanks{Scientific Computing Department, STFC, Rutherford Appleton Laboratory, Didcot OX11 0QX, UK (hussam.al-daas@stfc.ac.uk).} \and Grey Ballard\thanks{Department of Computer Science, Wake Forest University, Winston-Salem, NC 27106 USA (ballard@wfu.edu and vermabd@wfu.edu).} \and  Laura Grigori\thanks{Institute of Mathematics, EPFL, Lausanne, and PSI Center for Scientific Computing, Theory and Data, Villigen PSI, Switzerland (laura.grigori@epfl.ch)} \and Mariana Martinez Aguilar\thanks{Institute of Mathematics, EPFL, Lausanne, Switzerland (mariana.martinezaguilar@epfl.ch)} \and Arvind K.\ Saibaba\thanks{Department of Mathematics, North Carolina State University, Raleigh, NC 27695 USA (asaibab@ncsu.edu)} \and Bhisham Dev Verma\footnotemark[3]}
\newcommand*{\addFileDependency}[1]{% argument=file name and extension
  \typeout{(#1)}% latexmk will find this if $recorder=0 (however, in that case, it will ignore #1 if it is a .aux or .pdf file etc and it exists! if it doesn't exist, it will appear in the list of dependents regardless)
  \@addtofilelist{#1}% if you want it to appear in \listfiles, not really necessary and latexmk doesn't use this
  \IfFileExists{#1}{}{\typeout{No file #1.}}% latexmk will find this message if #1 doesn't exist (yet)
}
\begin{document}

\maketitle

\begin{abstract}
Approximating a tensor in the tensor train (TT) format has many important applications in scientific computing. 
Rounding a TT tensor involves further compressing a tensor that is already in the TT format. This paper proposes new randomized algorithms for TT-rounding that uses sketches based on Khatri-Rao products (KRP). When the TT-ranks are known in advance, the proposed methods are comparable in cost to the sketches that used a sketching matrix in the TT-format~\cite{al2023randomized}. However, the use of KRP sketches enables adaptive algorithms to round the tensor in the TT-format within a fixed user-specified tolerance. An important component of the adaptivity is the estimation of error using KRP sketching, for which we develop theoretical guarantees. We report numerical experiments on synthetic tensors, parametric low-rank kernel approximations, and the solution of parametric partial differential equations. The numerical experiments show that we obtain speed-ups of up to $50\times$ compared to deterministic TT-rounding. Both the computational cost analysis and numerical experiments verify that the adaptive algorithms are competitive with the fixed rank algorithms, suggesting the adaptivity introduces only a low overhead. 
\end{abstract}

%REQUIRED
\begin{keywords}
Low-rank approximations, Tensor decompositions, Tensor Train, Rounding, Randomized algorithms. 
\end{keywords}

% REQUIRED
\begin{MSCcodes}
65F55, 65F99, 68W20
\end{MSCcodes}

\section{Introduction} \label{sec:introduction}
A growing range of problems in scientific computing and data analysis involve high-dimensional data represented as tensors---multidimensional generalizations of vectors and matrices. When considering tensors, one must consider the specific tensor format to be used and the associated notions of ranks associated with tensors. Some examples of tensor formats are CP decomposition, Tucker, Tensor Train (TT), Hierarchical Tucker, etc., which have been reviewed in~\cite{ballard2025tensor,kolda2009tensor}. Inspired by the matrix-product-state (MPS), which is widely used in the computational quantum chemistry community \cite{Whi92}, Oseledets ~\cite{oseledets_tensor-train_2011} introduced the TT format to the numerical analysis community. The cost of storing the tensor in the TT-format depends linearly on the number of dimensions. A major challenge is to find a suitable approximation to a tensor in the TT-format with sufficiently small TT-ranks. There are several algorithms to convert a tensor into TT-format, such as TT-SVD~\cite{oseledets_tensor-train_2011} and cross-approximation algorithms \cite{DolS20,SavO11,Sav14}.

This paper is focused on TT-rounding, which is a special case of TT approximation. Rounding a TT-tensor involves finding a compressed representation of a tensor which is already in TT-format, but perhaps with large ranks. This is an important problem with applications to parametric low-rank kernel approximation~\cite{khan_parametric_2025}, quantum physics~\cite{camano_successive_2025}, high dimensional partial differential equations~\cite{APTT}, thermal radiation transport~\cite{gorodetsky2025thermal}, and the solution of linear tensor equations~\cite{bucci2025randomized}.

\paragraph{Contributions} In this paper, we derive new randomized algorithms for rounding a tensor in the TT-format. The specific contributions of our work are:
\begin{enumerate}
    \item We propose two new randomized algorithms (Sections~\ref{sec:fixed_rank} and~\ref{sec:adaptive_rank}) for rounding a TT-tensor. Both algorithms use a random tensor for sketching based on the Khatri-Rao product (KRP). The first algorithm assumes the target ranks are known \textit{a priori}, whereas the second algorithm takes a prespecified tolerance for the relative error and adaptively determines the ranks.
    \item We analyze, in detail, the computational complexity of both algorithms (Sections~\ref{sec:fixed_rank} and~\ref{sec:adaptive_rank}). While the proposed methods are comparable to existing randomized rounding algorithms, the use of KRPs enables adaptivity, which is missing in existing work. 
    \item The adaptive randomized algorithm uses a termination criterion based on a randomized Frobenius norm estimator. We also provide an analysis for this estimator (Section~\ref{ssec:analysis}). 
    \item Numerical experiments (Section~\ref{sec:experiments}) on three different sets of applications---synthetic tensors, parametric kernel low-rank approximation, and numerical solution of a parametric partial differential equation---demonstrate the performance of the method. In these applications, we observe speedups of up to $50\times$ compared to the deterministic TT-rounding techniques.
\end{enumerate}
Additionally, the code required to reproduce our results is available at \url{https://github.com/bhisham123/TT-Rounding-using-KRP-structure.git}.

\paragraph{Related Work} 

While there have been several developments in structured sketching for various tensor formats \cite{bucci_randomized_2024, haselby2025fast, hashemi_rtsms_2025, NEURIPS2022_fe91414c, yu_practical_2024}, we focus our review on TT-rounding. 
Oseledets proposed a deterministic method for rounding in TT format \cite{oseledets_tensor-train_2011}.  
Randomized algorithms for computing the TT decomposition can be found in \cite{ahmadi-asl_randomized_2020,che_randomized_2019,che2024randomized,huber_randomized_2017} with an adaptive implementation in \cite{che_randomized_2019}, but these references focus on tensor compression and not rounding, which is the focus of the present paper. 
A method for TT-rounding using the Gram matrices was proposed in~\cite{al2022parallel}. 

Randomized algorithms for TT-rounding were proposed in~\cite{al2023randomized}. Of these, the most relevant to this discussion are the Orthogonalize-then-Randomize (Orth-Rand) and Randomize-then-Orthogonalize (Rand-Orth).  A more detailed description of these algorithms is given in Sections~\ref{supp-sec:otr} and~\ref{supp-sec:rto}.  The Rand-Orth approach uses sketching matrices based on the TT-format and is applicable for only the fixed rank case. This approach is much more computationally efficient compared to the Orth-Rand approach, in which the computational cost is dominated by the initial orthogonalization phase. The Orth-Rand approach has an adaptive variant which enables a comparison to our proposed algorithms. Our approach uses sketches using KRPs and enables adaptivity in the TT-rounding. By also using TT-like structured matrices, Kressner et al. \cite{kressner_streaming} proposed a two-sided, Nystr\"om-like approximation for streaming TT approximation. However, this approach also uses fixed ranks. A comparison of many TT-compression techniques in the context of thermal radiation transport is given in~\cite{gorodetsky2025thermal}.

A recent paper \cite{camano_successive_2025} also uses randomization with KRP structure to compute single matrix-product-operator times matrix-product-state (MPO-MPS) without considering sums of products or particular structures. 
Our work is similar to this paper in that our approach also uses sketching matrices with KRP structure and also uses adaptivity. 
However, our approach can be applied more generally to any TT input and does not specifically exploit the MPO-MPS product structure. 
We also specialize our approach to exploit the structure of inputs that are sums of TT tensors, as discussed in Section~\ref{supp-sec:sum}.
A detailed comparison between the two approaches is outside the scope of the current paper, and we leave it to future work to develop a version of our algorithm that exploits the specific MPO-MPS structure.

The use of KRP sketches for TT-format was also explored in~\cite{bucci2025randomized}, but it was not applied to the rounding step. Recent work by~\cite{camano2025faster,che2024randomized,saibaba2025improved} has advanced the theoretical understanding of sketching using KRP matrices.

\section{Background} \label{sec:background}

\subsection{Notation} \label{subsec:notation}
We use boldface Euler script letters to denote tensors (e.g., $\Tn{X}$) and boldface capital letters for matrices (e.g., $\Mx{A}$). For indexing the entries tensors, we adopt MATLAB-like conventions. For example, we denote the entries of a three-way tensor $\Tn{X}$ as $\Tn{X}(i, j, k)$. Fixing all but one index results in a fiber (a vector), where $\Tn{X}(:,j,k)$ represents a column fiber, $\Tn{X}(i,:,k)$ represents a row fiber, and $\Tn{X}(i,j,:)$ represents a tube fiber. Fixing all but two indices results in slices. For a three-way tensor $\Tn{X}$, $\Tn{X}(i,:,:)$ is referred to as the $i$-th horizontal slice, $\Tn{X}(:,j,:)$ is the $j$-th lateral slice, and $\Tn{X}(:,:,k)$ is the $k$-th frontal slice. For tensors of order higher than three, slices and fibers do not have specific names. We alternatively use the terms \textit{way} or \textit{order} to denote the modes or dimensions of a tensor. $\Mx{X}_{(k)} \in \R^{n_k \times (N \setminus n_k)}$ denotes the mode-$k$ unfolding (or matricization) of a tensor $\Tn{X} \in \R^{n_1 \times \cdots \times n_d}$ where $N = n_1 \cdots n_d$. The columns of the mode-$k$ unfolding consist of the respective mode fibers. $\Mx{X}_{(1:k)} \in \R^{n_1 \cdots n_k \times n_{k+1} \cdots n_d}$ refers to a matrix unfolding where first $k$ modes contributes to row index and remaining $(d-k)$ modes to column index. We call it the unfolding of the first $k$-modes of the tensor. Mode-$k$ unfolding is its special case. For a matrix $\Mx{A} \in \R^{m \times n_k}$, the mode-$k$ product of a tensor $ \Tn{X}$ with $\Mx{A}$, denoted $ \Tn{Y} = \Tn{X} \times_k \Mx{A}$, represents a contraction along mode $k$. This operation can be defined via the mode-$k$ unfolding as  follows: $ \mathbf{Y}_{(k)} = \Mx{A} \Mx{X}_{(k)}$. The norm of a tensor $\Tn{X} \in \R^{n_1 \times \cdots \times n_d}$ is defined as $\|\Tn{X}\| = \left( \sum_{i_1, \ldots, i_d} a_{i_1, \ldots, i_d}^2\right)^{1/2}$ and is equivalent to the Frobenius norm of any of its unfoldings, i.e., $\|\Tn{X}\| = \|\Mx{X}_{(k)}\|_F$.

We use the symbol $\krn$ to denote the Kronecker product of vectors or matrices and $\krp$ for the KRP of matrices. For matrices $\Mx{A} \in \R^{m \times n}$ and $\Mx{B} \in \R^{p \times q}$, the Kronecker product $\Mx{A} \krn \Mx{B} \in \R^{mp \times np}$  is a block matrix defined as follows:
\[
\Mx{A} \krn \Mx{B}  = \begin{bmatrix}
    a_{11} \Mx{B}& \cdots & a_{1n} \Mx{B}\\
    \vdots & \ddots & \vdots\\
    a_{m1} \Mx{B} & \cdots & a_{mn} \Mx{B}
\end{bmatrix}
\]
The KRP is defined as a column-wise Kronecker product, i.e., if $\Mx{A} = \bmat{\Vc{a}_1 &  \dots & \Vc{a}_n}$ and $\Mx{B} = \bmat{\Vc{b}_1 & \dots &  \Vc{b}_n}$, then 
\[\Mx{A} \krp \Mx{B} = \bmat{\Vc{a}_1 \krn \Vc{b}_1 &  \dots & \Vc{a}_n \krn \Vc{b}_n} .
\]

\subsection{Tensor Train}\label{ssec:tt} A $d$-way tensor $\Tn{X} \in \R^{n_1 \times \cdots \times n_d}$ is said to be in tensor train (TT) format if there exist positive integers $r_0, \ldots, r_d$ called \textit{TT ranks}, with $r_0 = r_d=1$ and a sequence of $3$-way tensors $\Tn{X}_1,\ldots, \Tn{X}_d$ called \textit{TT cores} with $\Tn{X}_{k} \in \R^{r_{k-1} \times n_k \times r_k}$ for $1\leq k \leq d$ such that 
\begin{equation}
    \Tn{X}(i_1, \ldots, i_d) = \Tn{X}_1(i_1,:) \cdots \Tn{X}_k(:,i_{k-1},:) \cdots \Tn{X}_{d}(:,i_d)
\end{equation}
where $1\leq i_k \leq n_k$. We denote TT format tensor as $\Tn{X} = [\Tn{X}_1, \ldots, \Tn{X}_d]$. Since $r_0 = r_d = 1$, the first and last TT cores, $\Tn{X}_1$ and $\Tn{X}_d$ are matrices. $\Tn{X}_1(i_1, :)$ and $\Tn{X}_d(:, i_d)$ are  vectors  of dimension $r_1$ and $r_{d-1}$, respectively. For each core $\Tn{X}_k$, the slice $\Tn{X}_k(:, i_k, :)$ is an $r_{k-1} \times r_k$ matrix, which is called the $i_k$-th slice of the $k$-th TT core of the tensor $\Tn{X}$. Due to the multiplicative nature of such a format, the TT decomposition is not unique. Figure~\ref{fig:tt_tensor} illustrates the tensor network diagram of the TT tensor $\Tn{X}$, where the nodes represent the core tensors and the edges correspond to the modes, allowing for the contraction of connected nodes.

\begin{figure}[ht!] 
\centering
\scalebox{0.8}{\begin{tikzpicture}[scale=0.7]
    % Draw the first TT tensor (circle)
    \node[draw, circle, minimum size=1.2cm] (T1) at (0, 0) {$\Tn{X}_1$};
    % Draw the second TT tensor (circle)
    \node[draw, circle, minimum size=1.2cm] (T2) at (3, 0) {$\Tn{X}_2$};
    % Draw the third TT tensor (circle)
    \node[draw, circle, minimum size=1.2cm] (Td_1) at (8, 0) {$\Tn{X}_{d-1}$};
    % Draw the last TT tensor (circle)
    \node[draw, circle, minimum size=1.2cm] (Td) at (11, 0) {$\Tn{X}_d$};

    % Connect the tensors with  lines
    \draw[-] (T1.east) -- (T2.west) node[midway, above] {$r_1$};
    \draw[-] (T2.east) -- ++(1.2,0) node[midway, above] {$r_2$};  
    \draw[dotted] (T2.east) ++(1.6,0) -- ++(0.6,0); 
    \draw[-] (Td_1.west) ++(-1.2,0) -- ++(1.2,0) node[midway, above] {$r_{d-2}$}; 
    \draw[-] (Td_1.east) -- (Td.west) node[midway, above] {$r_{d-1}$};

    % % Draw a vertical line from T1 downward
    % \draw[-] (T1.south) -- (T1.south |- 0, -2); % Corrected vertical line from T1 downward

    % Draw a vertical line from T1 downward to a new node below T1
    % \node[below=1.3cm of T1.south] (I_1) node[near end, right]{$I_1$}; % Define I_1 

    \draw[-] (T1.south) -- ++(0,-1)  node[near end, right] {$n_1$};

    \draw[-] (T2.south) -- ++(0,-1)  node[near end, right] {$n_2$};

    \draw[-] (Td_1.south) -- ++(0,-1)  node[near end, right] {$n_{d-1}$};

    \draw[-] (Td.south) -- ++(0,-1)  node[near end, right] {$n_{d}$};
\end{tikzpicture}} 
\caption{Tensor network diagram for a TT-tensor $\Tn{X} = [\Tn{X}_1, \ldots, \Tn{X}_d ]$.}
\label{fig:tt_tensor}
\end{figure}

We use the following two matrix unfoldings of the $3$-way tensor to express linear algebra operations on TT cores: \textit{horizontal} and \textit{vertical} unfolding.  We use the operator $\h{\cdot}$ to denote the horizontal unfolding of a tensor. For a tensor $\Tn{X}_k$, $\h{\Tn{X}_k} \in \R^{r_{k-1} \times n_k r_k}$ represents the horizontal concatenation of the slices $\Tn{X}_k(:,i_k,:)$ for $i_k = 1, \dots, n_k$. Similarly, the vertical unfolding is obtained by concatenating the slices $\Tn{X}_k(:,i_k,:)$ for $i_k = 1, \dots, n_k$ vertically. We denote this operation by the operator $\v{\cdot}$, and $\v{\Tn{X}_k}$ results in a matrix of size $r_{k-1} n_k \times r_k$. 
An extension of the vertical and horizontal unfolding to the product of the TT cores is: $\v{\Tn{X}_{1:k}}$ and $\h{\Tn{X}_{k+1:d}}$, where $\Tn{X}{1:k}$ and $\Tn{X}{k+1:d}$ are the products of the left $k$ and right $(d-k)$ cores, respectively.
$\v{\Tn{X}_{1:k}} \in \R^{n_1 \cdots n_{k} \times r_k}$ corresponds to the transpose of the mode-$(k+1)$ unfolding of $\Tn{X}_{1:k}$, while $\h{\Tn{X}_{k+1:d}} \in \R^{r_k \times n_{k+1}\cdots n_d}$ is mode-$1$ unfolding of $\Tn{X}_{k+1:d}$.

The first $k$-mode unfolding $\Mx{X}_{(1:k)}$ of the  TT-tensor have rank $r_k$ representation
\begin{equation}
    \Mx{X}_{(1:k)} = \v{\Tn{X}_{1:k}} \h{\Tn{X}_{k+1:d}} 
\end{equation}
We can write the same unfolding as a product of four matrices as follows~\cite[Eq. (2.1)]{al2023randomized}:
\begin{equation}
\Mx{X}_{(1:k)} = \left(\Mx{I}_{n_k} \krn \v{\Tn{X}_{1:k-1}} \right) \v{\Tn{X}_k} \h{\Tn{X}_{k+1}} \left(\h{\Tn{X}_{k+1:d}} \krn \Mx{I}_{n_{k+1}} \right). \label{eq:unfold_1}
\end{equation}
Furthermore, we write it as a product of $(d-k+1)$ matrices as follows:
\begin{align}
    \Mx{X}_{(1:k)} &= \left(\Mx{I}_{n_k} \krn \v{\Tn{X}_{1:k-1}} \right) \v{\Tn{X}_k} \h{\Tn{X}_{k+1}}  \left(\h{\Tn{X}_{k+2}} \krn \Mx{I}_{n_{k+1}}\right) \cdots  \notag\\
    &\cdots
    \left(\h{\Tn{X}_{d-1}} \krn \Mx{I}_{n_{d-2}} \krn \cdots \krn \Mx{I}_{n_{k+1}}\right) \left(\h{\Tn{X}_{d}} \krn \Mx{I}_{n_{d-1}} \krn \cdots \krn  \Mx{I}_{n_{k+1}}\right).  \label{eq:unfold_2}
\end{align}
We use  these unfoldings in Section~\ref{sec:our_proposal}.
\subsection{TT-Rounding} \label{sec:tt_rounding}
First proposed in \cite{oseledets_tensor-train_2011}, TT rounding is the problem of computing a low-rank TT decomposition of an input already given in a higher-rank TT format. For the \textit{fixed-precision approximation problem}, given suboptimal ranks $r_k$, we want to estimate the true ranks $r'_{k} \leq r_{k}$ while maintaining the prescribed accuracy $\varepsilon$. In the \textit{fixed-rank} problem, given suboptimal ranks $r_k$ we seek an approximation with target ranks $\ell_k$.

To illustrate the TT rounding procedure, we first consider an analogy for matrices. Let $\Mx{Y} = \Mx{A} \Mx{B}^T$ where $\Mx{A}$ is an $m \times r$ matrix and $\Mx{B}$ is $r \times n$ with $r \leq \min\{m, n\}$. To obtain a low-rank approximation we could form the product $\Mx{A} \Mx{B}^T$ and then compute a truncated SVD, but exploiting the multiplicative structure gives us computational advantages. The computation of a rank $\ell < r$ approximation can be divided into an orthogonalization step followed by a compression phase. During the orthogonalization step, we want to make $\Mx{Y}$ right orthogonal by computing the thin QR factorization, $\Mx{B} = \Mx{Q} \Mx{R}$ and setting $\Mx{Z} = \Mx{A}\Mx{R}^T$. This initial step gives $\Mx{Y} = \Mx{A} \Mx{B}^T = \Mx{Z} \Mx{Q}^T$, where $\Mx{Q}^T$ has orthonormal rows. In the second step, $\Mx{Z}$ is compressed to rank $l$ via the SVD, $\Mx{Z} \approx \Mx{U} \Mx{\Sigma} \Mx{V}_{Z}^T$. The low-rank approximation of $\Mx{Y}$ is then given by $\Mx{Y} \approx \Mx{U} \Mx{\Sigma} (\Mx{Q}\Mx{V}_{Z})^T = \Mx{U} \Mx{\Sigma} \Mx{V}^T$. The classic TT rounding algorithm \cite{oseledets_tensor-train_2011} is also divided into two phases, orthogonalization followed by compression.

We say a tensor is \textit{right orthogonal} if all its horizontal unfoldings $\h{\Tn{X}_k}$ have orthonormal rows, except the first core. We similarly say that a tensor is \textit{left orthogonal} if all its vertical unfoldings $\v{\Tn{X}_k}$ have orthonormal columns except the last core.

\textit{Right-to-Left orthogonalization} is the procedure in which a TT tensor $\Tn{X}$ is transformed into an equivalent but right orthogonal TT tensor $\Tn{Y}$. Starting from the last core, we compute the thin {QR} factorization $\Mx{Q} \Mx{R} = \h{\Tn{X}_{d}}^T$ and set the cores $\Tn{Y}_{d-1}$ and $\Tn{Y}_{d}$ as

\begin{equation*}
    \v{\Tn{X}_{d-1}} \h{ \Tn{X}_{d}} = \underbrace{\v{\Tn{X}_{d-1}} \Mx{R}^T}_{\v{\Tn{Y}_{d-1}}} \underbrace{\Mx{Q}^T}_{\h{ \Tn{Y}_{d} }}.
\end{equation*}

This procedure is continued through the cores from $d-1$ to $2$. We could obtain instead a left orthogonal TT tensor similarly by processing the cores from left to right.

After obtaining a right-orthogonal tensor, we perform a compression step, processing the cores from left to right. For each mode, we compute a low-rank approximation of the vertical unfolding $\v{ \Tn{Y}_k }$ via the SVD. For the fixed precision problem, the number of singular vectors to keep depends on the threshold $\tau = \frac{\| \Tn{Y} \|}{\sqrt{d-1}} \varepsilon$. The vertical unfolding is set to $\v{ \Tn{Y}_k } = \Mx{U}$ while the other two factors $\Mx{\Sigma} \Mx{V}^T$ are combined with the horizontal unfolding of the next core $\h{\Tn{Y}_{k+1}}$ to obtain $\h{\Tn{Y}_{k+1}} = \Mx{\Sigma} \Mx{V}^T \h{\Tn{Y}_{k+1}}$. The process ends after $d-1$ steps. Details are given in Algorithm~\ref{alg:TT-Rounding}.

\begin{algorithm}[!ht]
  \caption{TT-Rounding \cite{oseledets_tensor-train_2011}}
  \label{alg:TT-Rounding}
  \begin{algorithmic}[1]
    \Require A TT-tensor $\Tn{X} = [\Tn{X}_1, \ldots, \Tn{X}_d]$ with TT-ranks $\{r_k\}$, user-defined threshold $\varepsilon>0$ or target ranks $\{\ell_k\}$
    \Ensure A TT-tensor $\Tn{Y} = [\Tn{Y}_1, \ldots, \Tn{Y}_d]$ with reduced ranks and desired accuracy or target ranks
    \Function{ $\Tn{Y}$ = TT-Rounding}{$\Tn{X}$, $\varepsilon$ or $\{\ell_k\}$}
    \State $\Tn{Y}$ = \textproc{OrthogonalizeRL}($\Tn{X}$) \Comment{right to left orthogonalization} \label{line:tt_rounding_orth_step}
    \State $\tau = \frac{\| \Tn{Y}_1 \|}{\sqrt{d-1}} \varepsilon$
    % \State $\Tn{Y}_{1} = \Tn{X}_{1}$
    \For{$k = 1$ to $d-1$ }
    \State [ $\v{ \Tn{Y}_{k} }$, $\Mx{R}$ ] = \texttt{QR}($\v{\Tn{Y}_{k}}$) \Comment{reduced QR} decomposition
    \State [ ${\Mx{U}}$, ${\Mx{\Sigma}}$, ${\Mx{V}}^T$ ] = \texttt{SVD}($\Mx{R}$, $\tau$ or $\ell_{k}$ ) \Comment{$\tau$ or $\ell_{k}$ truncated SVD}
    \State $\v{ \Tn{Y}_{k} } = \v{ \Tn{Y}_{k} } {\Mx{U}}$
    \State $\h{ \Tn{Y}_{k + 1} } = {\Mx{\Sigma}} {\Mx{V}}^T \h{ \Tn{Y}_{k + 1} }$  \Comment{$\Tn{Y}_{k+1} = \Tn{Y}_{k+1} \times_1 ({\Mx{\Sigma}} {\Mx{V}}^T)$}
    \EndFor
    \EndFunction
\end{algorithmic}
\end{algorithm}

\subsection{Randomized Matrix Algorithms}
In this section, we discuss the randomized low-rank matrix approximation methods, particularly the randomized range finder and its adaptive version. 

\paragraph{Randomized Range Finder} We briefly review the randomized range finder~\cite[Algorithm 4.1]{halko2011finding}. Let $\Mx{X} \in \R^{m \times n}$ be a given matrix with $m \geq n$, and we want to compute a rank-$r$ approximation, where $r \le n$. To do so, we first chose an oversampling parameter $p$ and define $\ell = r + p$. We then generate a random matrix $\Mx{\Omega} \in \R^{n \times \ell}$ whose entries are independently and identically sampled from a standard normal distribution (with zero mean and unit variance). We refer to this matrix as a {Gaussian random matrix}. 

Using the matrix $\Mx{\Omega}$, we construct a sketch matrix $\Mx{S} = \Mx{X} \Mx{\Omega}$. We then compute the thin QR decomposition of $\Mx{S}$, yielding an orthonormal matrix $\Mx{Q} \in \R^{m \times \ell}$ whose columns form a basis for the range of $\Mx{S}$. If $\Mx{X}$ has rank close to $r$, or if its singular values decay rapidly after the $r$-th one, then the subspace spanned by the columns of $\Mx{Q}$ provides a good approximation to the column space of $\Mx{X}$. In this case, $\Mx{X} \approx \Mx{Q} \Mx{Q}^T \Mx{X}$ and we use $\Mx{Q} \Mx{Q}^T \Mx{X}$ as  a low-rank approximation of $\Mx{X}$. Assuming the matrix $\Mx{X}$ is stored in a dense format, the computational cost of finding $\Mx{Q}$ using this approach  is $2mn\ell + O(m\ell^2)$ floating point operations (flops).

This procedure yields a rank-$\ell$ approximation of $\Mx{X}$. We can convert it to desired rank-$r$ approximation by additional post-processing, refer to \cite{halko2011finding} for more details.

\paragraph{Adaptive Randomized Range Finder} We review the adaptive randomized range finder in~\cite{martinsson2020randomized}. Suppose we are given a matrix $\Mx{X} \in \R^{m \times n}$ and a tolerance $\epsilon$, and we want to find a low-rank approximation of $\Mx{X}$ that is accurate to within precision $\epsilon$. This formulation represents a more practical scenario compared to the fixed-rank setting, as in most applications we do not have prior knowledge of the singular value spectrum of $\Mx{X}$ that would allow us to determine an appropriate rank $r$ in advance. The idea of adaptive range finder is to start with a low initial guess for the rank,  run the range finder, and check if we are within the desired tolerance. If not,  then draw more samples to enrich the basis until the required accuracy is achieved. Algorithm~\ref{alg:adap_rand_range_finder} gives a pseudocode of the adaptive range finder procedure.
\begin{algorithm}[!ht]
  \caption{Adaptive Randomized Range Finder~\cite{martinsson2020randomized}}
  \label{alg:adap_rand_range_finder}
  \begin{algorithmic}[1]
    \Require Target matrix $\Mx{X} \in \R^{m \times n}$, tolerance $\varepsilon$ and block size $b$
    \Ensure $\|\Mx{X} - \Mx{Q} \Mx{Q}^T \Mx{X}||_F \leq \varepsilon$
    \Function{$\Mx{Q}$=RandRangeFinder}{$\Mx{X}$, $\varepsilon$, $b$}
    \State Select a Gaussian matrix $\Mx{\Omega} \in \R^{n \times b}$
    \State $\Mx{S} = \Mx{X} \Mx{\Omega}$ 
    \State $[\Mx{Q},\sim] = \texttt{QR}(\Mx{S})$ \Comment{reduced QR decomposition}
    \While {$\|\Mx{X} - \Mx{Q} \Mx{Q}^T \Mx{X}\|_F > \varepsilon$} \label{lin:error_AdpRangeFinder}
    \State Select a Gaussian matrix $\Mx{\Omega} \in \R^{n \times b}$
    \State $\Mx{S} = \Mx{X} \Mx{\Omega}$
    \State $\Mx{S} = \Mx{S} - \Mx{Q}  \Mx{Q}^T\Mx{S}$
    \State $[\Tilde{\Mx{Q}}, \sim] = \texttt{QR}(\Mx{S})$ \Comment{reduced QR decomposition}
    \State $\Mx{Q} = \bmat{\Mx{Q} & \Tilde{\Mx{Q}}}$ \Comment{add new directions to the basis}
    \EndWhile
    \EndFunction
\end{algorithmic}

\end{algorithm}

In Algorithm~\ref{alg:adap_rand_range_finder}, instead of computing the exact residual error in line~\ref{lin:error_AdpRangeFinder}, one can use a sketching-based estimate of its magnitude to reduce computational cost: 
\begin{equation} 
\|\Mx{X} - \Mx{Q} \Mx{Q}^T \Mx{X}\|_F^2 \approx \frac{1}{s} \|(\Mx{I} - \Mx{Q} \Mx{Q}^T) \Mx{X}\Mx{\Omega}\|_F^2, \end{equation}
where $\Mx{\Omega} \in \R^{n \times s}$ is a random matrix with independent entries drawn from the standard normal distribution. 
For more details, refer to \cite{martinsson2020randomized}.

\section{Randomized TT Rounding Algorithms using KRPs} \label{sec:our_proposal}
In this section, we propose fixed-rank (Algorithm~\ref{sec:fixed_rank}) and adaptive-rank (Algorithm~\ref{sec:adaptive_rank}) randomized TT-rounding algorithms that utilize a sequence of random matrices with a KRP structure for sketching. 

We give a high-level overview of both algorithms. The fixed-rank variant follows a procedure similar to that of the Randomize-then-Orthogonalize algorithm introduced by \cite{al2023randomized}, consisting of a randomization phase followed by an orthogonalization phase. The key distinction lies in the randomization step. In \cite{al2023randomized} a random Gaussian TT-tensor is employed to generate sketches, a TT-tensor whose cores $\Tn{R}_{k}\in \mathbb{R}^{l_{k-1} \times n_{k} \times l_{k}}$ are filled with independent, normally distributed entries with zero mean and variance $1/(\ell_{k-1} n_k \ell_k)$. This is further explained in Section~\ref{supp-sec:rto}. Our proposed approach utilizes random matrices with KRP structure for sketching, providing an alternative mechanism to compress the TT tensor efficiently. Our adaptive-rank algorithm applies the randomization followed by orthogonalization steps iteratively until the desired approximation tolerance is achieved. We discuss it in detail in Section~\ref{sec:adaptive_rank}. 
The randomization phase is a core component of both of our proposed algorithms. In this phase, we exploit the structural properties of the TT format along with the KRP structure of the random matrices to compute partial contractions efficiently. We begin with a discussion of this partial contraction (Section~\ref{ssec:partial}).

Throughout this section, let $\Tn{X}\in \R^{n_1 \times \cdots \times n_d}$ in TT format with TT ranks $\crly{r_k}_{k=1}^{d-1}$ and TT cores $\crly{\Tn{X}_k}_{k=1}^d$, so that we can write $\Tn{X} = [\Tn{X}_1,\dots,\Tn{X}_d]$. We want to find a compressed representation $\Tn{Y} = [\Tn{Y}_1,\dots,\Tn{Y}_d]$, with ranks $\{\ell_j\}_{j=1}^{d-1}$ and $\ell_j \le r_j $ for $1 \le j \le d-1$. For the analysis, it is convenient to assume that $n_1=\dots=n_d = n$ and the ranks are uniform, i.e., $r_1 = \dots = r_{d-1} = r$ and $\ell_1 = \dots = \ell_{d-1} = \ell$.

\subsection{Partial contractions}\label{ssec:partial} Given a sequence of matrices $\crly{\Mx{\Omega}_k}_{k \geq 2}^{d}$, where each $\Mx{\Omega}_k \in \R^{n_k \times \ell}$ for $2 \le k \le d$. We define the partial contraction matrices $\crly{\Mx{W}_k}_{k=2}^d$ as follows:
\begin{equation}\label{eqn:partialcontraction}
\begin{aligned}
    \Mx{W}_k \equiv & \> \h{\Tn{X}_{k:d}} \left( \Mx{\Omega}_d \krp \cdots \krp \Mx{\Omega}_k \right), \qquad 2 \le k < d \\
    \Mx{W}_{d} \equiv & \> \h{\Tn{X}_d} \Mx{\Omega}_d. 
\end{aligned}
\end{equation}
The matrices $\{\Mx{W}_k\}_{k=2}^d$ represent the sketches of the horizontal unfoldings, and they will play an important role in our subsequent algorithms.

It is clear how to compute $\Mx{W}_d$ from the definition. We can compute the remaining contraction matrices sequentially for \( k = d-1, \ldots, 2 \) using the following recurrence:
\begin{align}
    \Mx{W}_k %&=  \h{\Tn{X}_{k:d}} \left( \Mx{\Omega}_d \krp \cdots \krp \Mx{\Omega}_k \right) \\
    & = \h{\Tn{X}_k} \left( \h{\Tn{X}_{k{+}1}} \kron \Mx{I}_{n_k} \right) {\cdots} 
    \left( \h{\Tn{X}_d} {\kron} \Mx{I}_{n_{d{-}1}} {\kron} {\cdots} {\kron} \Mx{I}_{n_k} \right) %\nonumber \\
    %& \qquad \qquad \times
    \left( \Mx{\Omega}_d \krp {\cdots} \krp \Mx{\Omega}_k \right)  \label{eq:partial_seq1} \\
    &= \h{\Tn{X}_k} \left[ \Mx{W}_{k+1} \krp \Mx{\Omega}_k \right], \label{eq:partial_seq2}
\end{align}
where $\Mx{I}{n_{j}}$ denotes the identity matrix of size $n_j \times n_j$, for $j \in \{k, \ldots, d-1\}$. Equation~\eqref{eq:partial_seq2} follows from \eqref{eq:partial_seq1} by the repeated application of the identity
$(\Mx{A} \kron \Mx{B})(\Mx{C} \krp \Mx{D}) = (\Mx{A} \Mx{C}) \krp (\Mx{B} \Mx{D})$. A pseudocode for computing the partial contraction with KRP-structured matrices, based on the above procedure, is provided in Algorithm~\ref{alg:partialContractionRL_KRP}. The algorithm allows computing only the sketches $\{\Mx{W}_k\}_{k=t}^d$ for some $ t \ge 2$. 

\begin{figure}[!ht] 
\centering
\resizebox{\textwidth}{!}{
\begin{tikzpicture}

%%%%================ Diagram (a) ================%%%%

% Tensors
\node[draw, circle, minimum size=1cm] (T1a) at (0,0) {$\Tn{X}_1$};
\node[draw, circle, minimum size=1cm] (T2a) [right=1.5cm of T1a] {$\Tn{X}_2$};
\node[draw, circle, minimum size=1cm] (T3a) [right=1.5cm of T2a] {$\Tn{X}_3$};
\node[draw, circle, minimum size=1cm] (T4a) [right=1.5cm of T3a] {$\Tn{X}_4$};

% Connections
\draw[-] (T1a.east) -- (T2a.west) node[midway, above] {$r_1$};
\draw[-] (T2a.east) -- (T3a.west) node[midway, above] {$r_2$};
\draw[-] (T3a.east) -- (T4a.west) node[midway, above] {$r_3$};

% Outputs
\node[draw, circle, minimum size=1cm] (O2a) [below=1.5cm of T2a] {$\Mx{\Omega}_2$};
\node[draw, circle, minimum size=1cm] (O3a) [right=1.5cm of O2a] {$\Mx{\Omega}_3$};
\node[draw, circle, minimum size=1cm] (O4a) [right=1.5cm of O3a] {$\Mx{\Omega}_4$};

% Vertical connections
\draw[-] (T1a.south) -- ++(0,-1.2) node[near end, right] {$n_1$};
\draw[-] (T2a.south) -- (O2a.north) node[midway, right] {$n_2$};
\draw[-] (T3a.south) -- (O3a.north) node[midway, right] {$n_3$};
\draw[-] (T4a.south) -- (O4a.north) node[midway, right] {$n_4$};

% % Bottom verticals
% \draw[-] (O2a.south) -- ++(0,-1.5) coordinate (O2abot);
% \draw[-] (O3a.south) -- ++(0,-0.5) coordinate (O3abot);
% \draw[-] (O4a.south) -- ++(0,-0.5) coordinate (O4abot);

% Bottom horizontal with extension and circle
% \draw[-] (O4abot) -- (O3abot) -- (O2abot) -- ++(-1.5,0) coordinate (leftenda);
% \path (O2abot) -- (leftenda) coordinate[pos=0.5] (circlecentera);
% \node[circle, draw, inner sep=1.2pt, fill=white] at (circlecentera) {};
% \node[left] at (leftenda) {$\ell$};

% Curved edges converging on circlecentera
\coordinate (circlecentera) at ($(T1a) + (1cm,-3.5cm)$);
\node[circle, draw, inner sep=1.2pt, fill=white] at (circlecentera) {};
\draw[-,shorten <=1pt] (circlecentera) -- ++(-0.6,0) node[left] {$\ell$};
%\node[left] at (circlecentera) {$\ell$};
\draw[-,shorten >= 1pt] (O2a.south) .. controls +(0,-0.4) and +(0,0) .. (circlecentera);
\draw[-,shorten >= 1pt] (O3a.south) .. controls +(0,-0.4) and +(0,0) .. (circlecentera);
\draw[-,shorten >= 1pt] (O4a.south) .. controls +(0,-0.7) and +(0,0) .. (circlecentera);

% Caption (a)
\node at ($(T2a)!0.5!(T3a) + (0,-4.5)$) {(a) Initial Structure};

%%%%================ Diagram (b) ================%%%%

% Offset for second diagram
\begin{scope}[xshift=10cm]

% Tensors
\node[draw, circle, minimum size=1cm] (T1b) at (0,0) {$\Tn{X}_1$};
\node[draw, circle, minimum size=1cm] (T2b) [right=1.5cm of T1b] {$\Tn{X}_2$};
\node[draw, circle, minimum size=1cm] (T3b) [right=1.5cm of T2b] {$\Tn{X}_3$};

% Outputs
\node[draw, circle, minimum size=1cm] (O2b) [below=1.5cm of T2b] {$\Mx{\Omega}_2$};
\node[draw, circle, minimum size=1cm] (O3b) [right=1.5cm of O2b] {$\Mx{\Omega}_3$};

% Extra node W4
% \coordinate (W4b) at ($(T3b) + (-40:2cm)$);
% \node[draw, circle, minimum size=1cm, fill=red!20] at (W4b) {$\Mx{W}_4$};

\node[draw, circle, minimum size=1cm, fill=red!20] (W4b) at ($(T3b) + (-40:2.3cm)$) {$\Mx{W}_4$};

% Connections
\draw[-] (T1b.east) -- (T2b.west) node[midway, above] {$r_1$};
\draw[-] (T2b.east) -- (T3b.west) node[midway, above] {$r_2$};
\draw[-] (T3b.east) -- (W4b.north) node[midway, above, sloped] {$r_3$};

Vertical connections
\draw[-] (T1b.south) -- ++(0,-1.2) node[near end, right] {$n_1$};
\draw[-] (T2b.south) -- (O2b.north) node[midway, right] {$n_2$};
\draw[-] (T3b.south) -- (O3b.north) node[midway, right] {$n_3$};

% % Bottom lines
% \draw[-] (O2b.south) -- ++(0,-0.5) coordinate (O2bbot);
% \draw[-] (O3b.south) -- ++(0,-0.5) coordinate (O3bbot);

% % Horizontal line with circle at right
% \draw[-] (O3bbot) -- (O2bbot) -- ++(-1.5,0) coordinate (leftendb);
% \draw[-] (O3bbot) -- ++(0.6,0) coordinate (rightendb);
% \node[circle, draw, inner sep=1.2pt, fill=white] at (rightendb) {};

% Diagonal to W4
% \draw[-] (rightendb) -- (W4b);

\coordinate (O4b) at ($(O3b) + (0.8cm, -0.87cm)$);
\draw[-] (O4b) -- (W4b.south);

\coordinate (O5b) at ($(O3b) + (-1.2cm, -0.95cm)$);
\node[circle, draw, inner sep=1.2pt, fill=white] at (O5b) {};

% % Curved edges converging on circlecentera
\coordinate (circlecenterb) at ($(T1b) + (1cm,-3.5cm)$);
\node[circle, draw, inner sep=1.2pt, fill=white] at (circlecenterb) {};
\draw[-,shorten <=1pt] (circlecenterb) -- ++(-0.6,0) node[left] {$\ell$};

\draw[-,shorten >= 1pt] (O2b.south) .. controls +(0,-0.4) and +(0,0) .. (circlecenterb);
\draw[-,shorten >= 1pt] (O3b.south) .. controls +(0,-0.38) and +(0,0) .. (O5b);
\draw[-,shorten <= 1pt] (O5b) -- (O4b);
\draw[-,shorten >=1pt, shorten <= 1pt] (circlecenterb) -- (O5b);

% Caption (b)
\node at ($(T2b)!0.5!(T3b) + (0,-4.5)$) {(b) Structure with $\Mx{W}_4$};

\end{scope}

%%%%================ Diagram (c) ================%%%%

% Offset for second diagram
\begin{scope}[xshift = 2cm, yshift=-6cm]

% Tensors
\node[draw, circle, minimum size=1cm] (T1c) at (0,0) {$\Tn{X}_1$};
\node[draw, circle, minimum size=1cm] (T2c) [right=1.5cm of T1c] {$\Tn{X}_2$};
% \node[draw, circle, minimum size=1cm] (T3b) [right=1.5cm of T2b] {$\Tn{X}_3$};

% Outputs
\node[draw, circle, minimum size=1cm] (O2c) [below=1.5cm of T2c] {$\Mx{\Omega}_2$};

\node[draw, circle, minimum size=1cm, fill=red!20] (W3c) at ($(T2c) + (-40:2.3cm)$) {$\Mx{W}_3$};

% Connections
\draw[-] (T1c.east) -- (T2c.west) node[midway, above] {$r_1$};
\draw[-] (T2c.east) -- (W3c.north) node[midway, above, sloped] {$r_2$};

% Vertical connections
\draw[-] (T1c.south) -- ++(0,-1.2) node[near end, right] {$n_1$};
\draw[-] (T2c.south) -- (O2c.north) node[midway, right] {$n_2$};

% Diagonal to W3
\coordinate (O3c) at ($(O2c) + (0.8cm, -1cm)$);
\draw[-] (O3c) -- (W3c.south);

% % Curved edges converging on circlecentera
\coordinate (circlecenterc) at ($(T1c) + (1cm,-3.5cm)$);
\node[circle, draw, inner sep=1.2pt, fill=white] at (circlecenterc) {};
\draw[-,shorten <=1pt] (circlecenterc) -- ++(-0.6,0) node[left] {$\ell$};
\draw[-,shorten >= 1pt] (O2c.south) .. controls +(0,-0.4) and +(0,0) .. (circlecenterc);
\draw[-,shorten <= 1pt] (circlecenterc) -- (O3c);

% Caption (c)
\node at ($(T1c)!0.6!(T2c) + (0,-4.5)$) {(c)Structure with $\Mx{W}_3$};

\end{scope}

%%%%================ Diagram (d) ================%%%%

% Offset for second diagram
\begin{scope}[yshift=-6cm, xshift = 11cm]

% Tensors
\node[draw, circle, minimum size=1cm] (T1d) at (0,-1.4) {$\Tn{X}_1$};
\node[draw, circle, minimum size=1cm, fill=red!20] (W2d) [right=1.5cm of T1d] {$\Mx{W}_2$};

% % Connections
\draw[-] (T1d.east) -- (W2d.west) node[midway, above] {$r_1$};

%% Vertical lines
\draw[-] (T1d.south) -- ++(0,-1)  node[near end, right] {$n_{1}$};
\draw[-] (W2d.south) -- ++(0,-1)  node[near end, right] {$\ell$};

% Caption (d)
\node at ($(T1d)!0.6!(W2d) + (0,-3cm)$) {(d) Structure with $\Mx{W}_2$};

\end{scope}

\end{tikzpicture}
}
\caption{An illustration of partial contraction of tensor $\Tn{X}= [\Tn{X}_1, \Tn{X}_2, \Tn{X}_3, \Tn{X}_4 ]$ with matrix $\Mx{\Omega} = \Mx{\Omega}_4 \kr  \Mx{\Omega}_3 \kr \Mx{\Omega}_2$.  }
\label{fig:partialcontraction_krp}
\end{figure}
Figure~\ref{fig:partialcontraction_krp} provides a pictorial illustration of the right-to-left partial contraction process for a $4$-way TT tensor. We can compute the left-to-right partial contractions by changing the order of computation in Algorithm~\ref{alg:partialContractionRL_KRP} from right-to-left to left-to-right.

\begin{algorithm}[!ht]
  \caption{Right-to-Left Contraction with KRP Structured Matrix}
  \label{alg:partialContractionRL_KRP}
  \begin{algorithmic}[1]
    \Require A TT-tensor $\Tn{X} = [\Tn{X}_1, \ldots, \Tn{X}_d]$ with TT-ranks $\{r_k\}$ and random matrices $\Mx{\Omega}_t, \ldots, \Mx{\Omega}_d$ where $\Mx{\Omega}_k \in \Mx{R}^{n_k \times \ell}$ for $k \in \{t, \ldots, d\}$
    \Ensure Matrices $\mathbf{W}_{k} \in \mathbb{R}^{r_{k-1} \times \ell}$ for $k \in \{t, \ldots, d\}$ 
    \Function{[\{$\mathbf{W}_{k}$\}]=KRP-PartialContractionsRL}{$\Tn{X}$, $[\Mx{\Omega}_t, \ldots, \Mx{\Omega}_d]$}
    \State $\Mx{W}_{d}$ = $\h{\Tn{X}_{d}} \Mx{\Omega}_d$ \Comment{$\Mx{W}_{d}$ is $r_{d-1} \times \ell$ matrix} \label{lin:krp_last_mode_cont}
    \For{$k=d-1$ to $t$ }
    \State $\Mx{W}_{k}$ =$\h{\Tn{X}_{k}}\left[\Mx{W}_{k+1} \odot \Mx{\Omega}_{k}\right]$ \Comment{$\Mx{W}_{k}$ is $r_{k-1} \times \ell$ matrix} \label{lin:krp_squen_cont}
    \EndFor
    \EndFunction
\end{algorithmic}

\end{algorithm}

\paragraph{Computational cost} 
The computation of the partial contraction involves a sequence of matrix multiplications and KRP operations. In Algorithm~\ref{alg:partialContractionRL_KRP}, line~\ref{lin:krp_last_mode_cont} performs a matrix multiplication between matrices of size \(r \times n\) and \(n \times \ell\), which requires \(2nr \ell\) flops. Line~\ref{lin:krp_squen_cont} computes a matricized tensor times Khatri-Rao product (MTTKRP) involving matrices of sizes \(r \times \ell\) and \(n \times \ell\).
This can be done in $2nr^2 l + 2r^2l$ operations. 
See \cite[Section 3.5]{ballard2025tensor} for more details.
We denote the overall leading-order computational cost of Algorithm~\ref{alg:partialContractionRL_KRP} by $C_{par}^{(t)}$ and is:

\begin{equation} \label{eq:part_cont}
C_{\rm PartCont}^{(t)} =\sum_{k=t}^{d-1} \left(2 n r^2 \ell \right) = 2(d-t) n r^2 \ell, \quad \text{for } t \geq 2.
\end{equation}

\subsection{Fixed Rank Algorithm}~\label{sec:fixed_rank}
In this algorithm, we assume that the target ranks $\{\ell_k\}_{k=1}^{d-1}$ are known a priori. 

As in the Randomize-then-Orthogonalize algorithm (Algorithm~\ref{alg:randomize_then_orthogonalize}), our proposed algorithm has two phases: a randomization phase, followed by an orthogonalization phase. In the randomization phase,  we generate independent Gaussian random  matrices $\Mx{\Omega}_k \in \mathbb{R}^{n_k \times \ell}$ for $2 \le k \le d$, where $\ell = \max_{1 \le k \le d-1}\ell_k$. Next, we perform an efficient contraction between the KRP of these random matrices and the TT-tensor \(\Tn{X}\), as described in Algorithm~\ref{alg:partialContractionRL_KRP}, to compute the partial contraction matrices $\{\Mx{W}_k\}_{k=2}^d$. These steps constitute the randomization phase of the algorithm.

In the orthogonalization phase, we construct a left-orthogonal compressed TT-tensor $\Tn{Y}$, and it is similar to the orthogonalization phase of Algorithm~\ref{alg:randomize_then_orthogonalize}. In Step 1, $k=1$ and consider the unfolding $\Mx{X}_{(1)} = \v{\Tn{X}_{1}} \h{\Tn{X}_{2}}(\h{\Tn{X}_{3:d}} \kron \Mx{I})$ where $\Mx{I}$ is an identity matrix of appropriate size.
We compute the sketch \[\begin{aligned}\Mx{S}_1 =& \>  \Mx{X}_{(1)} \left( \Mx{\Omega}_d(:,1:\ell_1) \krp \cdots \krp \Mx{\Omega}_{2} (:,1:\ell_1)\right) \\= & \>   \v{\Tn{X}_1}\Mx{W}_2(:,1:\ell_1). \end{aligned}\]
We compute a thin-QR factorization $\Mx{S}_1 = \Mx{Q}_1\Mx{R}_1$, and compute an approximation  $$\Mx{X}_{(1)} \approx \underbrace{\Mx{Q}_1}_{\v{\Tn{Y}_1}} \underbrace{\Mx{Q}_1^T\v{\Tn{X}_1} \h{\Tn{X}_{2}}}_{\h{\Tn{Y}_2}}(\h{\Tn{X}_{3:d}} \kron \Mx{I}).$$ 
We set $\v{\Tn{Y}_1} = \Mx{Q}_1$ and $\h{\Tn{Y}_2} = (\Mx{Q}_1^T\v{\Tn{X}_1}) \h{\Tn{X}_2}$. Proceeding iteratively, we have the approximation
\[ \Mx{X}_{(1:k)} \approx (\Mx{I} \kron \v{\Tn{Y}_{1:k-1}})\v{\Tn{Y}_k} \h{\Tn{X}_{k+1}} (\h{\Tn{X}_{k+2:d}} \kron \Mx{I})   , \quad 2 \le k \le d-1.\]
 At the $k$-th step, since the first $k - 1$ cores of $\Tn{Y}$ have already been orthogonalized, the matrix $(\Mx{I} \kron \v{\Tn{Y}_{1:k-1}})$ has orthonormal columns and does not need to be further compressed.
We compute the sketch $\Mx{S}_k$ and its thin-QR factorization as 
\[\begin{aligned}
    \Mx{S}_k  =& \> \v{\Tn{Y}_k} \h{\Tn{X}_{k+1}}(\left(\h{\Tn{X}_{k+2:d}} \kron\Mx{I}) \left( \Mx{\Omega}_d(:,1:\ell_k) \krp \cdots \krp \Mx{\Omega}_{k+1} (:,1:\ell_k)\right)\right)\\
    = & \> \v{\Tn{Y}_k} \Mx{W}_{k+1}(:,1:\ell_k)= \Mx{Q}_k \Mx{R}_k. 
\end{aligned}\]
Thus, we have the approximation for a fixed $k$, 
\[ \Mx{X}_{(1:k)} \approx (\Mx{I} \kron \v{\Tn{Y}_{1:k-1}}) \underbrace{\Mx{Q}_k}_{\v{\Tn{Y}_k}} \underbrace{\Mx{Q}_k^T \v{\Tn{Y}_k} \h{\Tn{X}_{k+1}}}_{\h{\Tn{Y}_{k+1}}}(\h{\Tn{X}_{k+2:d}} \kron \Mx{I})   .\] 

The TT-cores are then updated by overwriting \(\v{\Tn{Y}_k} \leftarrow \Mx{Q}_k\), and computing \(\h{\Tn{Y}_{k+1}} = (\Mx{Q}_k^T \v{\Tn{Y}_k})  \h{\Tn{X}_{k+1}}\). A pseudo-code of our fixed rank algorithm is given in Algorithm~\ref{alg:krp_ttroudning_fixRank}.

 \begin{algorithm}
    \caption{Fixed Rank TT-Rounding: Randomize-then-Orthogonalize using KRP}
     \label{alg:krp_ttroudning_fixRank}
  \begin{algorithmic}[1]
    \Require TT-tensors $\Tn{X} = [\Tn{X}_1, \ldots, \Tn{X}_d]$ with TT-ranks $\{r_k\}$, target ranks $\{\ell_k\}$
    \Ensure A tensor $\Tn{Y}= [\Tn{Y}_1, \ldots, \Tn{Y}_d]$  with TT-ranks $\{\ell_k\}$
    \Function{$\Tn{Y}$=TT-Rounding-RandOrth-KRP}{$\Tn{X}$, $\{\ell_k\}$}
    \State Select random Gaussian matrices $\Mx{\Omega}_2, \ldots, \Mx{\Omega}_d$ of size $n_k \times \ell$ for $2 \leq k \leq d$ where $\ell = \max\{\ell_k\}$
    \State $\{\Mx{W}_k\}$ = \textproc{KRP-PartialContractionsRL}($\Tn{X}$, $[\Mx{\Omega}_2, \ldots, \Mx{\Omega}_d]$) \label{lin:partialcont_krp}
    \State $\Tn{Y}_{1}$ = $\Tn{X}_{1}$
    \For{$k=1$ to $d-1$ }
    %\State $\Mx{Z}_k$ = $\v{\Tn{Y}_{k}}$ \Comment{$\Tn{Y}_{k}$ is $\ell_{k-1} \times n_k \times r_k$}
    %\State $\Mx{S}_k$ = $\Mx{Z}_k \Mx{W}_{k+1}(:,1:\ell_k)$ \Comment{form sketched matrix $n_k \ell_{k-1} \times \ell_k$} \label{lin:sketch_comp_krp}
    %\State [$\v{\Tn{Y}_{k}}$, $\sim$] = \texttt{QR}($\Mx{S}_k$) \Comment{ Reduced \texttt{QR} decomposition} \label{lin:thin_qr_krp}
    %\State $\Mx{M}_k$ = $\v{\Tn{Y}_{k}}^T \Mx{Z}_k$ \Comment{form $\ell_k \times r_k$ matrix}  \label{lin:M_k_kpr}
    %\State $\h{\Tn{Y}_{k+1}} = \Mx{M}_k \h{\Tn{X}_{k+1}}$ \Comment{$\Tn{Y}_{k+1} = \Tn{X}_{k+1} \times_1 \Mx{M}_k$} \label{lin:M_to_next_core_kpr}
     \State $\Mx{S}_k$ = $\v{\Tn{Y}_{k}} \Mx{W}_{k+1}(:,1:\ell_k)$ \Comment{form sketched matrix $n_k \ell_{k-1} \times \ell_k$} \label{lin:sketch_comp_krp}
    \State [$\Mx{Q}_k$, $\sim$] = \texttt{QR}($\Mx{S}_k$) \Comment{reduced QR decomposition} \label{lin:thin_qr_krp}
    \State $\Mx{M}_k$ = $\Mx{Q}_k^T\v{\Tn{Y}_{k}}$  \Comment{form $\ell_k \times r_k$ matrix}  \label{lin:M_k_kpr}
    \State Set $\v{\Tn{Y}_k}=\Mx{Q}_k$  and $\h{\Tn{Y}_{k+1}} = \Mx{M}_k \h{\Tn{X}_{k+1}}$ \Comment{$\Tn{Y}_{k+1} = \Tn{X}_{k+1} \times_1 \Mx{M}_k$} \label{lin:M_to_next_core_kpr}
    \EndFor
    \EndFunction
\end{algorithmic}
\end{algorithm}

\textbf{Computational cost:} Line~\ref{lin:partialcont_krp} invokes the right-to-left partial contraction Algorithm~\ref{alg:partialContractionRL_KRP} with $t = 2$, resulting in a computational cost of  $C_{\rm PartCont}^{(2)}$.  Line~\ref{lin:sketch_comp_krp} computes the final sketch matrix and involves matrix multiplication, resulting in $2 n \ell r$ operations for $k=1$ and $2 n \ell^2 r$ operations for $k \geq 2$.  Line~\ref{lin:thin_qr_krp} performs a thin QR decomposition on a tall-skinny matrix of size $n\ell  \times \ell$, which costs $ 4n\ell^3 + O(\ell^3)$ flops. Line~\ref{lin:M_k_kpr} computes $\Mx{M}_k$ by multiplying matrices of size $\ell \times n\ell $ and $n\ell  \times r$, resulting in \( 2 n \ell^2 r \) flops.  Finally, Line~\ref{lin:M_to_next_core_kpr} prepares the core \( \Tn{Y}_{k+1} \) for the next iteration and involves matrix multiplication, costs $2 n \ell r^2$ flops for $k < d-1$ and $2 n \ell r$ for $k=d-1$. The total computational cost  of Algorithm~\ref{alg:krp_ttroudning_fixRank} in flops is 
\begin{align*}
C_{\rm FixRank} = C_{\rm PartCont}^{(2)} + 2(d-2) n \ell r^2 + 4(d-2) n \ell^2 r + 4(d-1) n \ell^3  + 4n \ell r +  O(d\ell^3).
\end{align*}
To leading order, this cost is $4dnr^2\ell$ flops.
Table~\ref{tab:rounding_costs} summarizes the comparison of the leading-order computational and the number of random numbers generated  (denoted RNG) costs of different algorithms for rounding. 

\begin{table}[!ht]
\caption{Leading-order computational and random number generation (RNG) costs for TT rounding algorithms.}
    \label{tab:rounding_costs}
    \centering
    \begin{tabular}{l|ccccc}
    \hline
        \textbf{Algo.} & TT-Rounding & Orth-Rand & Rand-Orth &  Rand-Orth-KRP \\
        \hline
        Cost & $O(dnr^3)$&$O(dnr^3)$ & $O(dnr^2 \ell)$ &$O(dnr^2 \ell)$  \\
        \hline 
         RNGs & - & $O(dr \ell)$ & $O(dn \ell^2)$ & $O(d n\ell)$ \\ 
        \hline
    \end{tabular}
    
\end{table}

\subsection{Adaptive Rank Algorithm}~\label{sec:adaptive_rank}
We want to find a compressed TT-tensor $\Tn{Y}$  while maintaining the prescribed relative error $\varepsilon$, i.e., $\|\Tn{X} - \Tn{Y}\| \leq \varepsilon\| \Tn{X}\|$. Our adaptive rank algorithm, Algorithm~\ref{alg:krp_ttroudning_adapRank}, is motivated by the adaptive randomized range finder, Algorithm~\ref{alg:adap_rand_range_finder}. It incrementally builds the core tensors $\{\Tn{Y}_i\}_{i=1}^d$ of the left-orthogonal compressed TT-tensor $\Tn{Y}$ by combining the randomized range finder and a blocked Gram-Schmidt orthogonalization scheme. We use an estimate of the norm of the residual as the stopping criterion for terminating the iterations in the range finder.

The algorithm requires user-defined parameters $0 < f_{\rm init} < 1$, $ 0 < f_{\rm inc} < 1$ representing fractions of the input ranks to be used as an initial sketching sizes and incremental sketching sizes, respectively. 
In numerical experiments, we use $f_{\rm init} = 0.1$ and $f_{\rm inc}=0.05$, but these can be tuned in applications. 
First, we generate independent Gaussian random  matrices $\Mx{\Omega}_k \in \mathbb{R}^{n_k \times r}$ for $2 \le k \le d$, where $r = \lceil\max\{r_k\} \cdot f_{\rm init} \rceil$. 
Then, using these random matrices, we compute the partial contraction matrices $\{\Mx{W}_k\}_{k=2}^d$ by invoking Algorithm~\ref{alg:partialContractionRL_KRP}.  
We  evenly distribute the required accuracy $\varepsilon$ across the TT-cores $\{\Tn{X}_k\}_{k=1}^{d-1}$ and denote the resulting modewise threshold by $\tau = \varepsilon \|\Tn{X}\|/\sqrt{d-1}$. A justification for this allocation is given in \cite[Theorem 8.6]{ballard2025tensor}. We begin with $\Tn{Y}_1 = \Tn{X}_1$.  The process of computing the left orthonormal compressed TT-core $\Tn{Y}_k$ for mode $k$ can be divided into two phases for each $k$, where $1 \le k \le d-1$. 

\textbf{Phase 1: {Initialization}} This phase consists of computing the $b_{\rm init}^{(k)}$ orthonormal basis  
\begin{equation*}\label{eqn:binit}b_{\rm init}^{(k)} = \lceil f_{\rm init} \cdot \bar{r}_k \rceil,\end{equation*}
where $\bar{r}_k$ denotes the minimum of the number of rows and columns in the updated core $\v{\Tn{Y}_k}$. 
The first phase of the algorithm to initialize the low-rank factors essentially follows the fixed rank case (Algorithm~\ref{alg:partialContractionRL_KRP}) with $\ell_k = b_{\rm init}^{(k)}$. 
At line~\ref{line:adp_init_sketch}, we compute the sketch matrix $\Mx{S}_k$ using partial contraction $\Mx{W}_{k+1}$ as 
\begin{align}
\Mx{S}_k &=\v{\Tn{Y}_{k}} \underbrace{\h{\Tn{X}_{k+1:d}} (\Mx{\Omega}_{d}(:,1:b_{\rm init}^{(k)}) \kr \cdots \kr \Mx{\Omega}_{k+1}(:,1:b_{\rm init}^{(k)}) )}_{\Mx{W}_{k+1}(:, 1:b_{\rm init}^{(k)})} \notag\\
&= \v{\Tn{Y}_{k}} \Mx{W}_{k+1}(:, 1:b_{\rm init}^{(k)}). \notag 
\end{align}
The QR decomposition of the sketch matrix $\Mx{S}_k$
yields the orthonormal basis of dimension $b_{\rm init}^{(k)}$, which we denote by $\Mx{Q}_k$ and set $\h{\Tn{Y}_{k+1}} = \Mx{Q}_k^T \v{\Tn{Y}_k} \h{\Tn{X}_{k+1}}$.

\textbf{Phase 2: {Iteration}} Let $b_{\text{inc}}^{(k)}$ be the incremental block size for $1\le k\le d-1$. We define it as 
\[
b_{\rm inc}^{(k)} = \lceil f_{\text{inc}} \cdot \bar{r}_k\rceil,
\]
where $\bar{r}_k$ is the minimum of the number of rows and columns in the core $\v{\Tn{Y}_k}$.
To expand the orthonormal basis, we compute the residual sketch $\Mx{S}_k$ and update the partial contraction matrices $\{\Mx{W}_j\}_{j = k+1}^d$ by invoking Algorithm~\ref{alg:resdSketching}. We explain the working of Algorithm~\ref{alg:resdSketching} as follows: if the partial contraction matrix $\Mx{W}_{k+1}$ does not contain $b_{\rm inc}^{(k)}$ additional columns (beyond those already used), we generate new random Gaussian matrices $\{\Mx{\Omega}_j\}_{j=k+1}^d$ and  construct new partial contractions $\{\Mx{W}_j^{\text{new}}\}_{j=k+1}^d$ using Algorithm~\ref{alg:partialContractionRL_KRP}, and concatenate them with the existing contraction matrices:
\[
\Mx{W}_j \leftarrow \bmat{\Mx{W}_j & \Mx{W}_j^{\text{new}}}, \quad \text{for all } j = k+1, \ldots, d.
\]
We then form the next sketch matrix:
\begin{equation}
\Mx{S}_k \leftarrow \v{\Tn{Y}_k} \Mx{W}_{k+1}(:, \col{\Mx{Q}_k}+1 : \col{\Mx{Q}_k}+b_{\rm inc}^{(k)}),  \label{eq:resid_sketch_a}
\end{equation}
where $\col{\cdot}$ denotes the current number of columns in the matrix, and orthogonalize it against the existing basis to get a residual sketch:
\begin{equation}
\Mx{S}_k \leftarrow \Mx{S}_k - \Mx{Q}_k \left( \Mx{Q}_k^T \Mx{S}_k \right).  \label{eq:resid_sketch_b}
\end{equation}
The quantity $\|\Mx{S}_k\|_F/\sqrt{b_{\rm inc}^{(k)}}$   provides an unbiased estimate of the Frobenius norm of the residual error matrix. 
A detailed analysis of this estimate is provided in Section~\ref{ssec:analysis}. 
If the estimate exceeds $\tau$, we compute the QR decomposition of the residual sketch to get new orthonormal directions 
\[
\Mx{S}_k = \Mx{Q}_k^{new} \, \Mx{R}_k^{new}.
\]
We reorthogonalize the directions in $\Mx{Q}_k^{new}$ with respect to the current basis $\Mx{Q}_k$:
\[
\Mx{Q}_k^{new} \leftarrow \texttt{QR}\left(\Mx{Q}_k^{new} - \Mx{Q}_k \left(\Mx{Q}_k^T \Mx{Q}_k^{new} \right) \right).
\]
This helps to mitigate numerical errors and ensures that the updated basis $\Mx{Q}_k \leftarrow \bmat{\Mx{Q}_k & \Mx{Q}_k^{new}}$ remains numerically orthonormal~\cite[Section 4.3]{martinsson2016randomized}. 
We then efficiently update the next core by utilizing the $\h{\Tn{Y}_{k+1}}$ computed so far in the previous steps as follows:
\[
\h{\Tn{Y}_{k+1}} \leftarrow \Mx{Q}_k^T \v{\Tn{Y}_k} \h{\Tn{X}_{k+1}} =  \bmat{\h{\Tn{Y}_{k+1}}\\ \left(\Mx{Q}_k^{new}\right)^T \v{\Tn{Y}_k} \h{\Tn{X}_{k+1}}}.
\]
We repeat step 2 until the residual norm falls below the threshold $\tau$ or the maximum possible rank $\bar{r}_k$ (i.e., minimum of the number of rows and columns in $\v{\Tn{Y}_k}$) is reached and set $\v{\Tn{Y}_k} = \Mx{Q}_k$.

See Algorithm~\ref{alg:krp_ttroudning_adapRank} for a detailed pseudocode of the above procedure.
We give a few remarks regarding Algorithm~\ref{alg:krp_ttroudning_adapRank}.

\begin{algorithm}[!ht]
    \caption{Residual sketching for basis expansion using KRPs.}
     \label{alg:resdSketching}
    \begin{algorithmic}[1]
    \Require TT tensor $\Tn{X}$,  matrix $\Mx{Z} \in \R^{\ell_{k-1} n_k \times r_k}$, basis matrix $\Mx{Q}$, partial contractions $\{\Mx{W}_{j}\}_{j={k+1}}^d$,  TT core index $k$ and block size $b$.
    \Ensure Residual sketch $\Mx{S}$ and partial contraction matrices $\{\Mx{W}_j\}_{j=k+1}^d$ appended with new columns %S= (\Mx{I} - \Mx{Q} \Mx{Q}^T)\v{\Tn{Y}_k} \h{\Tn{X}_{k+1:d}} $
    \Function{[$\Mx{S}$, $\{\Mx{W}_j\}$]=GenerateResidualSketch}{$\Tn{X}$, $\Mx{Z}$, $\Mx{Q}$, $\{\Mx{W}_{j}\}_{j=k+1}^d$, $k$, $b$}
      \If {$\col{\Mx{W}_{k+1}} < \col{\Mx{Q}} + b$} \Comment{$\col{\cdot}$ returns  number of columns} 
        \State Select Gaussian random matrices $\Mx{\Omega}_{k+1}, \ldots, \Mx{\Omega}_d$ of size $n_j \times (b - \col{\Mx{W}_{k+1}})$ for $k+1\leq j \leq d$ 
        \State $\{\Mx{W}_j^{new}\}$ = \textproc{KRP-PartialContractionsRL}($\Tn{X}$, $[\Mx{\Omega}_{k+1}, \ldots, \Mx{\Omega}_d]$)
            \For {$j = k+1$ to $d$}
                \State $\Mx{W}_{j} = \bmat{\Mx{W}_j & \Mx{W}_{j}^{new}}$ \Comment{append columns}
            \EndFor
        \EndIf
        \State  $\Mx{S}$ = $\Mx{Z} \Mx{W}_{k+1}\left(:,\col{\Mx{Q}}+1: \col{\Mx{Q}} +b \right)$
        % \If {$\col{\v{\Tn{Y}}} >0$}
        \State $\Mx{S} = \Mx{S} - \Mx{Q} \left(\Mx{Q}^T \Mx{S} \right)$  \label{line:ResSketch_orth} \Comment{orthogonalization of $\Mx{S}$}
        % \EndIf
    \EndFunction
\end{algorithmic}

\end{algorithm}

\begin{algorithm}[!ht]
    \caption{Adaptive Rank TT-Rounding: Randomize-then-Orthogonalize using KRP}
     \label{alg:krp_ttroudning_adapRank}
  \begin{algorithmic}[1]
   \small
    \Require TT-tensors $\Tn{X} = [\Tn{X}_1, \ldots, \Tn{X}_d]$ with TT-ranks $\{r_k\}$, norm of the tensor $\Tn{X}$ \texttt{nrmx}, tolerance $\varepsilon$,  $0 < f_{\rm init} < 1$ to set initial block size, $0 < f_{\rm inc} < 1$ to set increment block size
    \Ensure A tensor $\Tn{Y}= [\Tn{Y}_1, \ldots, \Tn{Y}_d]$  with $\|\Tn{X} - \Tn{Y}\|/\|\Tn{X}\| \leq \varepsilon$ with high probability.
   
    \Function{$\Tn{Y}$=TT-Rounding-RandOrth-KRP-Adaptive}{$\Tn{X}$, \texttt{nrmx}, $\varepsilon$, $f_{\rm init}$, $f_{\rm inc}$}
    \State Select random Gaussian matrices $\Mx{\Omega}_2, \ldots, \Mx{\Omega}_d$ of size $n_k \times r$ for $2 \leq k \leq d$ where $r = \lceil\max(\{r_j\}_{j=1}^{d-1}) \cdot f_{\rm init} \rceil$
    \State $\{\Mx{W}_k\}$ = \textproc{KRP-PartialContractionsRL}($\Tn{X}$, $[\Mx{\Omega}_2, \ldots, \Mx{\Omega}_d]$)  \label{line:adp_init_part_con}
    \State $\tau = (\varepsilon \cdot \texttt{nrmx})/\sqrt{d-1}$
    %\State $\{\Mx{W}_j\}= [ \,]$ \Comment{Stores partial contractions}
    \State $\Tn{Y}_1$ = $\Tn{X}_1$
    \For{$k=1$ to $d-1$ }
        % \State $\Mx{Z}_k$ = $\v{\Tn{Y}_{k}}$ \Comment{$\Tn{Y}_{k}$ is $\ell_{k-1} \times n_k \times r_k$} \label{line:adp_step1_start}
        % \State $\maxRank = \min(\size{\Mx{Z}_k})$
        \State $b_{init}^{(k)}  = \lceil \bar{r}_k \cdot f_{init} \rceil$  \Comment{initial block size, where $\bar{r}_k = \min(\size{\v{\Tn{Y}_k}})$ }
       % \State [$\Mx{S}_k$, $\{\Mx{W}_j\}$] = \textproc{GenerateResidualSketch}($[~]$, $\Tn{X}$, $\Mx{Z}_k$, $\{[]\}$, $k$, $b_{init}^{(k)}$) \label{line:adp_init_sketch}
       \State $\Mx{S}_k = \v{\Tn{Y}_k} \Mx{W}_{k+1}(:,1:b_{init}^{(k)})$ \Comment{form sketched matrix $n_k \ell_{k-1} \times b_{init}^{(k)}$}  \label{line:adp_init_sketch}
        \State $[\Mx{Q}_k,\sim]$ = $\texttt{QR}(\Mx{S}_k)$ \Comment{reduced QR decomposition}
        \State $\Mx{M}_k = \Mx{Q}_k^{T} \v{\Tn{Y}_k}$ \Comment{form $b_{init}^{(k)} \times r_k$ matrix} 
        \State $\h{\Tn{Y}_{k+1}} = \Mx{M}_k \h{\Tn{X}_{k+1}}$ \Comment{$\Tn{Y}_{k+1} = 
        \Tn{X}_{k+1} \times_1 \Mx{M}_k$} \label{line:adp_step1_end}
        \State $b_{inc}^{(k)} = \lceil \bar{r}_k \cdot f_{inc} \rceil$  \Comment{incremental block size}
        %\If {$b_{inc}^{(k)} == 0$}
        %\State Continue
        %\EndIf
        \State [$\Mx{S}_k$, $\{\Mx{W}_j\}$] = \textproc{GenerateResidualSketch}($\Tn{X}$, $\v{\Tn{Y}_k}$, $\Mx{Q}_k$, $\{\Mx{W}_j\}$, $k$, $b_{inc}^{(k)}$)
        \While {$\|\Mx{S}_{k}\|_F/\sqrt{b_{inc}^{(k)}} \, > \, \tau$} %\Comment{$\texttt{norm}(\Mx{S}_k)$ is Frobenius norm of $\Mx{S}_k$}
                \State $[\Mx{Q}_k^{new},\sim] = \texttt{QR}(\Mx{S}_k)$  \Comment{reduced QR decomposition}
                \State $\Mx{Q}_k^{new} = \texttt{QR}\left(\Mx{Q}_k^{new} - \Mx{Q}_k \left(\Mx{Q}^T_k \Mx{Q}_k^{new} \right) \right)$ \label{line:adap_reOrtho}\Comment{re-orthogonalization }
                \State $\Mx{Q}_k= \bmat{\Mx{Q}_k & \Mx{Q}_k^{new}}$ \Comment{add new orthonormal directions}
                \State $\Mx{M}_k = \left(\Mx{Q}_k^{new} \right)^T \v{\Tn{Y}_k}$
                \State $\h{\Tn{Y}_{k+1}} = \bmat{\h{\Tn{Y}_{k+1}} \\ \Mx{M}_k \,\h{\Tn{X}_{k+1}}}$ %\Comment{$\Tn{Y}_{k+1} = \Tn{X}_{k+1} \times_1 \Mx{M}_k$}
                %\State $b_{inc}^{(k)} = \min{(b_{inc}^{(k)}, \maxRank - \col{\v{\Tn{Y}_k}}}$
                %\If {$b_{inc}^{(k)} == 0$}
                % \State   break
                %\EndIf
        \State [$\Mx{S}_k$, $\{\Mx{W}_j\}$] = \textproc{GenerateResidualSketch}($\Tn{X}$, $\v{\Tn{Y}_k}$, $\Mx{Q}_k$, $\{\Mx{W}_j\}$, $k$, $b_{inc}^{(k)}$)
        \EndWhile
        \State $\v{\Tn{Y}_k} = \Mx{Q}_k$
    \EndFor
    \EndFunction
\end{algorithmic}
\end{algorithm}

\begin{remark}[Input tensor norm estimation]\label{rem:normestimation}
    In Algorithm~\ref{alg:krp_ttroudning_adapRank}, we assume that the norm of the input tensor $\Tn{X}$ is known. 
    If this is not the case, computing it exactly via orthogonalization or TT inner product requires as much computation as deterministic TT rounding \cite{ABB22}.
    Instead, we can estimate it using the partial contraction computed in line~\ref{line:adp_init_part_con} as follows:
    \begin{equation}
        \|\Tn{X}\| \approx \frac{\|\v{\Tn{X}_1}\Mx{W}_{2}\|_F}{\sqrt{r}}. \label{eq:resid_norm_estimation}
    \end{equation} 
    where $r$ denotes the sketch size (number of columns in $\Mx{W}_2$). An analysis of the norm estimation as well as the residual error estimation in the While loop of Algorithm~\ref{alg:krp_ttroudning_adapRank} is given in Section~\ref{ssec:analysis}. 
\end{remark}
\begin{remark}[Additional Compression] \label{rem:rounding_pass}
   For $k \in \{1, \ldots, d-1\}$, Algorithm~\ref{alg:krp_ttroudning_adapRank} incrementally enriches the initially computed orthogonal basis by a block of size $b_{\text{inc}}^{(k)}$ until the residual norm falls below the threshold $\tau$. There is the possibility that the ranks of the compressed tensor produced by this algorithm may exceed those obtained from the deterministic TT-rounding algorithm (Algorithm~\ref{alg:TT-Rounding}) due to either a large incremental block size or an inaccurate estimate of the residual norm.
The cores of the compressed tensor produced by Algorithm~\ref{alg:krp_ttroudning_adapRank} are left-orthonormal (except the last core).  
We can reduce the ranks by performing a compression pass on the tensor cores, similar to that in the deterministic TT-rounding algorithm (Algorithm~\ref{alg:TT-Rounding}, Lines~\ref{line:comp_start}--\ref{line:comp_end}), but in the opposite direction, i.e., for $k = d$ down to $2$, with truncation threshold $\tau$.
\end{remark}

\begin{remark}[Sum of TT-tensors] \label{rem:sum}
The adaptive rank algorithm (Algorithm~\ref{alg:krp_ttroudning_adapRank}) can be tailored to inputs that are sums of TT tensors.
One approach to handling such inputs is to compute a formal sum of the TT tensors, resulting in a TT tensor whose ranks are the sums of the TT summand ranks but whose cores have sparse structure, and then apply a rounding algorithm.
Instead, we can exploit the property of our randomized algorithm that the sketch of the sum is the sum of the sketches.
That is, we can sketch each TT summand individually using the approach of Algorithm~\ref{alg:partialContractionRL_KRP} and then accumulate the result efficiently.
We omit the details of this case because it follows the approach described in \cite[Section 3.3]{al2023randomized}; the only difference for the fixed-rank case is in how the partial contractions are computed to exploit the KRP structure of the sketches.
We also incorporate error estimation and basis augmentation to enable adaptivity, as in the case of a single TT tensor input.
We demonstrate the efficiency of this approach in Algorithm~\ref{ssec:cookies}.
\end{remark}

In the adaptive-rank algorithm (Algorithm~\ref{alg:krp_ttroudning_adapRank}), the extra computational overhead compared to the fixed-rank algorithm (Algorithm~\ref{alg:resdSketching}) arises from the orthogonalization step (Line~\ref{line:ResSketch_orth} of Algorithm~\ref{alg:resdSketching}) and the re-orthogonalization step (Line~\ref{line:adap_reOrtho} of Algorithm~\ref{alg:krp_ttroudning_adapRank}). However, both of these steps involve operations on smaller matrices and introduce only a minor overhead. As a result, the leading-order computational cost remains the same for both the fixed-rank and adaptive-rank algorithms.

\subsection{Analysis of norm estimate}\label{ssec:analysis} Here, we analyze the accuracy of estimating the Frobenius norm of a matrix using randomized sketches based on KRPs of Gaussian matrices. In the following theorem, we set $N = \prod_{k=1}^d n_k$.

\begin{theorem}\label{thm:frobenius-preservation} Let $\Mx{A} \in \R^{m \times N}$  and $\Mx{\Omega} = \Mx{\Omega}_d \krp \cdots \krp \Mx{\Omega}_2\krp \Mx{\Omega}_1$, where each $\Mx{\Omega}_k \in \R^{n_k \times \ell}$ is Gaussian random matrix and $d > 1$. Then $\frac{1}{\ell} \|\Mx{A} \Mx{\Omega}\|_F^2$ gives an unbiased estimate of the $\|\Mx{A}\|_F^2$. Let  $\varepsilon >0$  and $ 0 < \delta \leq 1$. If the number of samples $\ell$ satisfies
$$\ell \geq 4\max\big(\varepsilon^{-1} C_\alpha'' \ln^d(2/\delta ), \, \varepsilon^{-2} {C_{\alpha}'}^2 \ln^2(2/\delta)\big),$$ 
where $C_{\alpha}'$ and $C_{\alpha}''$ are constants depending only on $\alpha \equiv 1/d$,  then with probability at least $1-\delta$
\[  \left|\frac{1}{\ell}\|\Mx{A} \Mx{\Omega}\|_F^2 - \|\Mx{A}\|_F^2 \right| \leq \varepsilon\|\Mx{A}\|_F^2.
\]
\end{theorem}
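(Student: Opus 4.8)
The plan is to reduce the statement to a concentration bound for the random variable $Z = \|\Mx{A}\Mx{\Omega}\|_F^2/\ell$, where $\Mx{\Omega} = \Mx{\Omega}_d \krp \cdots \krp \Mx{\Omega}_1$ is a KRP of Gaussian matrices. First I would establish unbiasedness: writing $\Mx{\Omega} = [\V{\omega}^{(1)} \mid \cdots \mid \V{\omega}^{(\ell)}]$, the columns $\V{\omega}^{(j)} = \V{\omega}_d^{(j)} \krn \cdots \krn \V{\omega}_1^{(j)}$ are i.i.d.\ across $j$, and each has the property that $\mathbb{E}[\V{\omega}^{(j)}(\V{\omega}^{(j)})^\Tra] = \Mx{I}_N$ because the expectation of a Kronecker product of independent isotropic vectors is the Kronecker product of the identities. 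Hence $\mathbb{E}\|\Mx{A}\V{\omega}^{(j)}\|_2^2 = \operatorname{trace}(\Mx{A}^\Tra\Mx{A}) = \|\Mx{A}\|_F^2$, and averaging over the $\ell$ columns gives $\mathbb{E}[Z] = \|\Mx{A}\|_F^2$. This is the easy part.

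For the concentration bound, the key observation is that $Z$ is an average of $\ell$ i.i.d.\ nonnegative random variables $Y_j = \|\Mx{A}\V{\omega}^{(j)}\|_2^2$, each of which is a \emph{degree-$d$ polynomial chaos} in the underlying Gaussian entries (a quadratic form in the Kronecker product $\V{\omega}^{(j)}$, which itself is a degree-$d$ tensor monomial). The natural tool is a Hanson--Wright-type / Gaussian chaos tail inequality of Latała: for a homogeneous Gaussian chaos of order $d$, one obtains moment bounds $\|Y_j - \mathbb{E}Y_j\|_p \lesssim_d \sum_{\text{partitions}} p^{|\text{partition}|/2}(\cdots)$, which translate into a tail of the mixed subexponential/subgaussian form $\Pr[|Y_j - \mathbb{E}Y_j| > t] \le 2\exp(-c_\alpha \min((t/\sigma_2)^2, (t/\sigma_d)^{2/d}))$ for appropriate moment parameters. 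I would cite the KRP-sketching moment estimates already developed in the references (e.g.\ \cite{saibaba2025improved,che2024randomized}) rather than re-deriving Latała's inequality; those give exactly the constants $C_\alpha'$, $C_\alpha''$ with $\alpha = 1/d$ appearing in the statement. Averaging over $\ell$ independent copies and applying a Bernstein-type bound for sums of such heavy-tailed-but-still-subexponential variables, the deviation $|Z - \|\Mx{A}\|_F^2| \le \varepsilon\|\Mx{A}\|_F^2$ holds once $\ell$ dominates both the subgaussian regime term ($\varepsilon^{-2}{C_\alpha'}^2\ln^2(2/\delta)$) and the heavier degree-$d$ tail term ($\varepsilon^{-1}C_\alpha''\ln^d(2/\delta)$); the $\ln^d$ versus $\ln^2$ split is precisely the signature of order-$d$ chaos, and the $\max$ of the two terms is what appears in the hypothesis.

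The main obstacle I anticipate is getting the dependence on $d$ (equivalently $\alpha = 1/d$) into the clean two-term form stated, rather than a messier sum over all set-partitions of $\{1,\dots,d\}$ that Latała's general bound produces. The resolution is that for the specific chaos arising from a KRP of \emph{isotropic} Gaussian blocks contracted against a fixed matrix $\Mx{A}$, the relevant "matrix norms" over partitions collapse: the extreme partitions (the finest, giving the operator-norm / degree-$d$ term $\sigma_d$, and the coarsest, giving the Frobenius / subgaussian term $\sigma_2 \asymp \|\Mx{A}\|_F^2$) dominate, and intermediate partitions are controlled by interpolation $\sigma_j \le \sigma_2^{1-\theta}\sigma_d^{\theta}$. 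I would make this precise by bounding $\sigma_j \le \|\Mx{A}\|_F^2$ crudely in every intermediate case, absorbing everything into $C_\alpha'$ and $C_\alpha''$. The remaining bookkeeping — choosing oversampling/union-bound constants so the failure probability is $\le \delta$, and verifying the final inequality $\ell \ge 4\max(\cdots)$ suffices — is routine Bernstein/Chernoff manipulation, and I would relegate the detailed constant-tracking to a supplementary section.
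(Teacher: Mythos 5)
Your proposal takes a genuinely different technical route from the paper, although it would arrive at the same two-term sample-size condition. The paper's proof never invokes Lata{\l}a's chaos inequality or reasons about partitions at all: it bounds the $L^p$ norms of linear forms $\Vc{y}\Vc{\omega}$ against a KRP column by citing a lemma of Ahle et al.\ (moments of Kronecker products of sub-gaussians grow like $(C\sqrt{p})^d$), converts that moment growth into the statement that each centered summand $X_i$ is $\alpha$-sub-exponential with $\alpha = 1/d$ (via a lemma of G\"otze, Sambale, and Sinulis), and then applies an off-the-shelf generalized Bernstein inequality for sums of $\alpha$-sub-exponential variables due to Kuchibhotla and Chakrabortty; the remaining work is just solving $\ell \ge \beta + \gamma\sqrt{\ell}$ for $\ell$. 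Your route instead treats $\|\Mx{A}\Vc{\omega}^{(j)}\|_2^2$ directly as Gaussian chaos and reaches for a Lata{\l}a/Hanson--Wright tail with its attendant sum over set partitions, observing correctly that the extreme partitions dominate and the intermediate ones can be absorbed by interpolation or crude Frobenius bounds. Both routes work; yours is conceptually more self-contained and makes the origin of the $\ln^d$ term more transparent, while the paper's is shorter because the partition combinatorics are buried in the cited moment lemma and the Bernstein-type step is a black box designed precisely for $\alpha$-sub-exponential sums.

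One imprecision worth fixing: $Y_j = \|\Mx{A}\Vc{\omega}^{(j)}\|_2^2$ is a quadratic form in $\Vc{\omega}^{(j)}$, and $\Vc{\omega}^{(j)}$ is itself a degree-$d$ monomial in the underlying Gaussians, so $Y_j$ is a chaos of order $2d$, not order $d$. The tail exponent should therefore be $2/(2d) = 1/d$, not the $2/d$ you wrote; that is, the mixed tail should read $\exp\bigl(-c_\alpha\min((t/\sigma_2)^2,\,(t/\sigma_{2d})^{1/d})\bigr)$. Your final conclusion (the $\ln^d$ versus $\ln^2$ split, with $\alpha = 1/d$) is consistent with the correct order $2d$, so once the exponent is repaired nothing else breaks; just be sure the Lata{\l}a-type bound you cite is the one for order-$2d$ chaos.
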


We introduce some concepts from probability used in the proof of Theorem~\ref{thm:frobenius-preservation}. Let $X$ be a random variable. For $p \geq 1$, the $L^{p}$ norm is defined as $\|X\|_{L^{p}} = ( \mathbb{E} | X |^{p} )^{1/p}$. Let $\alpha > 0$, $X$ is an $\alpha$--sub-exponential random variable if for $t > 0$, $\text{Pr} \left\{ | X | \geq t \right\} \leq c \,\text{exp}(-C t ^{\alpha} ) $, where $c$ and $C$ are two absolute constants. The quasi-norm $\| \cdot \|_{\Psi_{\alpha}}$ is defined as
\[ \| X \|_{\Psi_{\alpha}} = \text{inf} \left\{ t > 0 \; | \; \mathbb{E} \text{exp}\left( | X |^{\alpha}/t^{\alpha} \right) \leq 2 \right\}. \]
\begin{proof}[Proof of Theorem~\ref{thm:frobenius-preservation}]
We can easily prove that  the estimator is unbiased by showing that $\mathbb{E}\left[\frac{1}{\ell} \|\mathbf{A} \mathbf{\Omega}\|_F^2\right] = \|\mathbf{A}\|_F^2$. The proof of the remaining part is split into two steps. In the first step, we define the sequence of independent random variables $X_i$ for  $1 \le i\le \ell$, and some properties of the random variables. In the second step, we establish concentration inequalities for the sum of the random variables.

\paragraph{Step 1: Defining the random variables}Define the sequence of independent random variables 
\begin{align*}
    X_i &:=  \frac{1}{\|\Mx{A}\|_F^2}\left( \| \Mx{A} \Vc{\omega}^{(i)}\|^2 - \|\Mx{A}\|_F^2\right)= \frac{1}{\|\Mx{A}\|_F^2} \sum_{j=1}^{m} \left( (\Vc{a}_j \Vc{\omega}^{(i)})^2 - \| \Vc{a}_j\|_2^2 \right),
\end{align*}
for $1\le i \le \ell$, where $\Vc{a}_{j} \in \R^{1 \times N}$ denotes the $j$-th row of $\Mx{A}$ and $\Vc{\omega}^{(i)} \in \R^{N \times 1}$ represent the $i$-th column of $\Mx{\Omega}$.
Next, we will analyze the properties of $X_i$ for $1 \le i \le \ell$. 

Let $\Vc{\omega} = \Vc{\omega}_{1} \kron \cdots \kron \Vc{\omega}_{d}$ where $\Vc{\omega}_{k} \in \R^{n_k\times 1 }$ for $k =\crly{1, \ldots, d}$ with \textit{i.i.d.} entries sampled from $\mathcal{N}(0,1)$. For any arbitrary vector $\Vc{y}_k \in \R^{1 \times n_k}$, by~\cite[Lemma 3.4.2]{vershynin2018high}, $\Vc{y}_k\Vc{\omega}_k$ is subgaussian with norm  $\|\Vc{y}_k\Vc{\omega}_k \|_{\Psi_2}\le C_S K \|\Vc{y}_k\|_2$, where $C_S$ is an absolute constant and $K$ is bound on subgaussian entries of $\Vc{\omega}^{(k)}$. For the standard normal distribution, $K=1$. Next, due to~\cite[Section 2.5.2]{vershynin2018high}, we have 
\[ \| \Vc{y}_k \Vc{\omega}_k\|_{L^p} \le C_S \sqrt{p} \|\Vc{y}_k\|_2 \qquad \forall ~\V{y}_k \in \R^{n_j}, \> 1 \le j \le d, \> p \ge 1. \] 
Using~\cite[Lemma 19]{ahle2020oblivious}, whose assumptions we just verified, we have that for any  arbitrary vector $\V{y} \in \R^{1 \times N}$ 
\begin{equation} \label{eqn:krd}\| \V{y}\Vc{\omega}\|_{L^p} \le (C_S \sqrt{p})^d \|\V{y}\|_2, \qquad \forall p \ge 1.   
\end{equation}
By using the triangle inequality and the centering property of $L^p$ norms~\cite[Lemma 47]{haselby2025fast},  we have
 \[ \begin{aligned}
 \|X_i\|_{L^p} &\leq  \frac{1}{\|\Mx{A}\|_F^2}\sum_{j=1}^m \left \| (\Vc{a}_j \Vc{\omega}^{(i)})^2 - \| \Vc{a}_j\|_2^2 \right\|_{L^p} \leq  \frac{2}{\|\Mx{A}\|_F^2} \sum_{j=1}^m \left \| (\Vc{a}_j \Vc{\omega}^{(i)})^2 \right\|_{L^p}\\
 & \leq  \frac{2}{\|\Mx{A}\|_F^2}\sum_{j=1}^m \left \| \Vc{a}_j \Vc{\omega}^{(i)} \right\|_{L^{2p}}^2\leq  \frac{2}{\|\Mx{A}\|_F^2} (C_s \sqrt{2p})^{2d} \|\Mx{A}\|_F^2 = 2 \, (C_s \sqrt{2})^{2d}  \, p^d.    
 \end{aligned}\]
As $\|X_i\|_{L^{p}} \leq 2 (C_s \sqrt{2})^{2d} p^d$ implies $X_i$ is an $\alpha$--sub-exponential random variable with $\alpha =\frac{1}{d}$. More precisely, by~\cite[Lemma A.2]{gotze2021concentration} 
\[ \|X_i\|_{\Psi_\alpha} \le (2e)^{d} 2(C_S \sqrt{2})^{2d}  \equiv C_{d}.  \]
\paragraph{Step 2: Concentration inequality} Let $\Vc{b} := \frac{1}{\ell}(C_d, \ldots, C_d)$ be an $\ell$ dimensional vector, then $\|\Vc{b}\|_{2} = C_d/\sqrt{\ell}$ and $\|\Vc{b}\|_{\infty} = C_d/\ell$. For $\alpha = 1/d < 1$, by \cite[Theorem 3.1]{kuchibhotla2022moving}, we have
$$
\text{Pr} \left\{ \left|\frac{1}{\ell}\sum_{i=1}^\ell X_i \right| \geq C_{\alpha}' \frac{\sqrt{t}}{ \sqrt{\ell}} +  C_{\alpha}'' \frac{t^d}{\ell}\right\} \leq 2e^{-t}   \quad \text{ for } t \geq 0
$$
 where the constants $C_{\alpha}' \equiv 2 e C(\alpha) C_d$,  $C_{\alpha}'' \equiv \sqrt{2}e 4^d  C(\alpha) C_d $  and $$C(\alpha) \equiv 4e^3(2\pi)^{1/4} e^{1/24} (e^{2/e}/\alpha)^{1/\alpha}$$ depend only on $\alpha = 1/d$. Setting  the failure probability $2 e^{-t} = \delta$ implies $t = \ln(2/\delta)$. Thus, with probability at least $1-\delta$, we have
 $$\left|\frac{1}{\ell}\sum_{i=1}^\ell X_i \right| \leq C_{\alpha}' \sqrt{\frac{\ln(2/\delta)}{\ell}} +  C_{\alpha}'' \frac{\ln^d(2/\delta)}{\ell}.
 $$
 Setting $\varepsilon  \geq C_{\alpha}' \sqrt{\frac{\ln(2/\delta)}{\ell}} +  C_{\alpha}'' \frac{\ln^d(2/\delta)}{\ell}$, we get an inequality $\ell \geq \beta + \gamma \sqrt{\ell}$, where $\beta = \varepsilon^{-1} C_{\alpha}'' \ln^d(2/\delta) $ and $\gamma = \varepsilon^{-1} C_{\alpha}' \ln(2/\delta)$. We can solve the equality $\ell = \beta + \gamma \sqrt{\ell}$ for positive root $\sqrt{\ell}$ as
 $$
 \sqrt{\ell}_{+} = \gamma/2 + \sqrt{\gamma^2 + 4 \beta}/2.
 $$
The function $\ell - \gamma \sqrt{\ell} - \beta$ is a parabola that is concave upward in $\ell$ and crossing the real line at the root $\sqrt{\ell}_{+}$.
 For $\sqrt{\ell} \geq \sqrt{\ell}_{+}$, $\ell - \gamma \sqrt{\ell} - \beta \geq 0$, so we can take $\sqrt{\ell} \geq 2 \max\{\sqrt{\beta}, \gamma\}$ or $\ell \geq 4 \max\{\beta, \gamma^2\}$. This gives the minimal number of samples $\ell$ as stated in the theorem. 
\end{proof}
The above theorem gives a relative error bound for the performance of the norm estimation. The corollary below is more useful to get an order-of-magnitude estimate of the norm. 

\begin{corollary}\label{corr:frobeniusnormpreservation}
    Let $\Mx{A}$ and $\Mx{\Omega}$ be defined as in Theorem~\ref{thm:frobenius-preservation}. Let $\zeta >  1 $ be a user-defined parameter and $ 0 < \delta < 1$ be the failure probability.  If the number of samples $\ell$ satisfies $$\ell \geq 4\max\left( C_\alpha'' \frac{\zeta}{\zeta-1} \ln^d(2/\delta ), \frac{\zeta^2}{(\zeta-1)^2} {C_{\alpha}'}^2 \ln^2(2/\delta)\right),$$ where $C_{\alpha}'$ and $C_{\alpha}''$ are constants depending only on $\alpha \equiv 1/d$,  then with probability at least $1-\delta$
\[  \frac{1}{\zeta} \|\Mx{A}\|_F^2 \le \frac{1}{\ell}\|\Mx{A} \Mx{\Omega}\|_F^2 \le \zeta\|\Mx{A}\|_F^2 .
\]
\end{corollary}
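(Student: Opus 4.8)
The plan is to obtain Corollary~\ref{corr:frobeniusnormpreservation} as a direct specialization of Theorem~\ref{thm:frobenius-preservation}, choosing the relative-error parameter $\varepsilon$ appropriately. Concretely, I would set $\varepsilon = 1 - 1/\zeta = (\zeta-1)/\zeta$. Since $\zeta > 1$, this is a valid choice with $0 < \varepsilon < 1$, and it has the convenient feature that $\varepsilon^{-1} = \zeta/(\zeta-1)$ and $\varepsilon^{-2} = \zeta^2/(\zeta-1)^2$. Substituting these into the sample-size requirement of Theorem~\ref{thm:frobenius-preservation} reproduces exactly the hypothesis
\[ \ell \ge 4\max\!\left( C_\alpha'' \tfrac{\zeta}{\zeta-1}\ln^d(2/\delta),\; \tfrac{\zeta^2}{(\zeta-1)^2}{C_{\alpha}'}^2\ln^2(2/\delta)\right) \]
assumed in the corollary, so no new condition on $\ell$ is introduced.

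Next I would invoke Theorem~\ref{thm:frobenius-preservation} with this $\varepsilon$ and the given $\delta$: on an event of probability at least $1-\delta$,
\[ \left|\frac{1}{\ell}\|\Mx{A} \Mx{\Omega}\|_F^2 - \|\Mx{A}\|_F^2\right| \le \frac{\zeta-1}{\zeta}\|\Mx{A}\|_F^2 . \]
Dropping the absolute value yields, on the same event, the two-sided bound
\[ \frac{1}{\zeta}\|\Mx{A}\|_F^2 = \left(1-\tfrac{\zeta-1}{\zeta}\right)\|\Mx{A}\|_F^2 \le \frac{1}{\ell}\|\Mx{A} \Mx{\Omega}\|_F^2 \le \left(1+\tfrac{\zeta-1}{\zeta}\right)\|\Mx{A}\|_F^2 = \frac{2\zeta-1}{\zeta}\|\Mx{A}\|_F^2 . \]
The left inequality is already the claimed lower bound. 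For the right inequality, I would observe that $(\zeta-1)^2 \ge 0$ rearranges to $2\zeta - 1 \le \zeta^2$, hence $\tfrac{2\zeta-1}{\zeta} \le \zeta$, so that $\tfrac{1}{\ell}\|\Mx{A} \Mx{\Omega}\|_F^2 \le \zeta\|\Mx{A}\|_F^2$, which closes the proof.

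There is essentially no obstacle here: the entire argument is a change of variable $\varepsilon \leftrightarrow \zeta$ followed by elementary algebra. The only step deserving a word of comment is the final weakening $\tfrac{2\zeta-1}{\zeta} \le \zeta$, which trades the slightly sharper symmetric bound for the cleaner, more interpretable multiplicative sandwich $\tfrac{1}{\zeta}\|\Mx{A}\|_F^2 \le \tfrac{1}{\ell}\|\Mx{A}\Mx{\Omega}\|_F^2 \le \zeta\|\Mx{A}\|_F^2$ that is used later to control the norm and residual estimates in Algorithm~\ref{alg:krp_ttroudning_adapRank}.
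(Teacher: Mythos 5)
Your proof is correct and follows essentially the same route as the paper: both proofs set $\varepsilon = 1 - 1/\zeta$, invoke Theorem~\ref{thm:frobenius-preservation}, take the lower bound directly, and weaken the upper bound from $1+\varepsilon$ to $\zeta = 1/(1-\varepsilon)$ (your $(\zeta-1)^2 \ge 0$ is the same inequality as the paper's $1-\varepsilon^2 < 1$, written in $\zeta$-variables). The only cosmetic difference is that you explicitly check that the sample-size hypothesis matches after the change of variables, which the paper leaves implicit.
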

\begin{proof}
    Since $\zeta > 1$, set $1 - \varepsilon = \zeta^{-1} < 1$. From Theorem~\ref{thm:frobenius-preservation},  with probability at least $1-\delta$
    \[ \zeta^{-1} \|\Mx{A}\|_F^2 =    (1-\varepsilon) \|\Mx{A}\|_F^2 \le \frac{1}{\ell}\|\Mx{A} \Mx{\Omega}\|_F^2 \le (1+\varepsilon) \|\Mx{A}\|_F^2 \le \zeta \|\Mx{A}\|_F^2. \] 
The last inequality follows since $\varepsilon > 0$, so $1 - \varepsilon^2 < 1$ and $1 + \varepsilon < \frac{1}{1 - \varepsilon} = \zeta$. This completes the proof.
\end{proof}

\paragraph{Numerical illustration} Using the above theorem, we can approximate the norm of a tensor $\Tn{X} \in \R^{n_1 \times \cdots \times n_d}$ by setting $\Mx{A} = \Mx{X}_{(1)}$ and $\Mx{\Omega} = \Mx{\Omega}_{d} \kr \cdots \kr \Mx{\Omega}_{2}$. Figure~\ref{fig:norm_estimaton} summarizes the empirical evaluation of norm estimation of the order-$d$ synthetic tensors. The procedure for generating the synthetic tensors is described in Section~\ref{sec:synthetic_tensor}.

\begin{figure}[!ht]  %[H]
    \centering
    \includegraphics[width=0.5\linewidth]{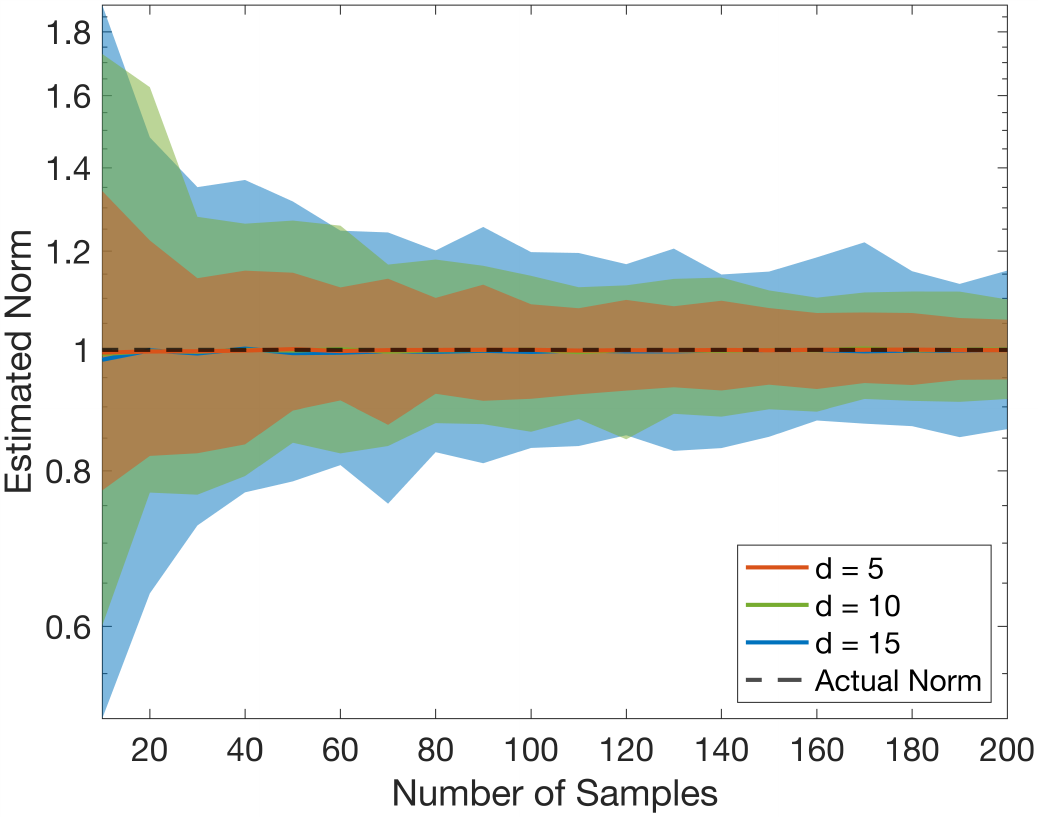}
   \caption{Average estimated norm over 1000 runs for synthetic tensors with different numbers of modes ($d$). The solid lines represent the average, and the shaded region indicates the minimum and maximum values observed across the runs.}

    \label{fig:norm_estimaton}
\end{figure}
In Figure~\ref{fig:norm_estimaton}, we see that the accuracy of the norm estimates improves with an increasing number of samples, as expected. Furthermore, the variability in the estimates, indicated by the width of the shaded region, increases with the number of modes. This reflects greater estimation uncertainty for higher-order tensors, which aligns with the theoretical results presented in Theorem~\ref{thm:frobenius-preservation}.

\paragraph{Residual error estimation} Theorem~\ref{thm:frobenius-preservation} and Corollary~\ref{corr:frobeniusnormpreservation} give a justification for the initial norm estimation (see also Remark~\ref{rem:normestimation}). We now argue that these results provide a justification of the Frobenius norm estimate of the residual matrix used in Algorithm~\ref{alg:krp_ttroudning_adapRank}. Suppose in Algorithm~\ref{alg:krp_ttroudning_adapRank}, at iteration $k$, the basis matrix corresponding to mode-$k$ computed so far is denoted by $\Mx{Q}_k$. Then, the residual matrix, say $\Mx{A}_k$  corresponding to $\Mx{Q}_k$ is 
$$
\Mx{A}_k = \left(\Mx{I} - \Mx{Q}_k \Mx{Q}_k^T \right) \v{\Tn{Y}_{k}}\h{\Tn{X}_{k+1:d}} \in \R^{n_k\ell_{k-1}\times \prod_{j=k+1}^d n_{j}}.
$$
Let $\Mx{\Omega}^{(k)} = \Mx{\Omega}_d \krp \cdots \krp \Mx{\Omega}_{k+1}$, where $\Mx{\Omega}_j \in \R^{n_j \times b_{\rm inc}^{(k)}}$ are drawn  from Gaussian distribution for $j \in \{k+1, \ldots, d\}$. Then the sketch matrix $\Mx{S}_k \in \R^{\ell_{k-1} n_k \times b_{\rm inc}^{(k)}}$ of residual matrix $\Mx{A}_k$
 with respect to the random matrix $\Mx{\Omega}^{(k)}$ is updated as in~\eqref{eq:resid_sketch_b}:
 \begin{align*}
 \Mx{S}_k &\leftarrow \left(\Mx{I} - \Mx{Q}_k \Mx{Q}_{k}^T \right) \v{\Tn{Y}_{k}} \underbrace{\h{\Tn{X}_{k+1:d}} (\Mx{\Omega}_d \krp \cdots \krp \Mx{\Omega}_{k+1})}_{ \equiv\Mx{W}_{k+1}(:,\col{\Mx{Q}_k}+1 : \col{\Mx{Q}_k}+b_{\rm inc}^{(k)})}\\
 &= \underbrace{\left(\Mx{I} - \Mx{Q}_{k} \Mx{Q}_{k}^T \right)  \Mx{S}_{k}}_{\equiv \text{Equation \eqref{eq:resid_sketch_b}}}.
 \end{align*}
%\AKS{The double reference to $\Mx{S}_k$ is confusing.}
By replacing  $\Mx{A}$ with the residual matrix $\Mx{A}_k$, $\Mx{\Omega}$ with the sketching matrix $ \Mx{\Omega}^{(k)}$ and $d$ with $d-k$, Theorem~\ref{thm:frobenius-preservation} and Corollary~\ref{corr:frobeniusnormpreservation} provide a justification of the Frobenius norm estimate of the residual matrix used in Algorithm~\ref{alg:krp_ttroudning_adapRank}.

\section{Experimental Results} \label{sec:experiments}
 Given a tensor $\Tn{X}$ and its approximation $\hat{\Tn{X}}$, the relative error is defined to be ${\|\Tn{X} - \hat{\Tn{X}}\|}/{\|\Tn{X}\|}$. As mentioned earlier, code required to reproduce our results is available at {https://github.com/bhisham123/TT-Rounding-using-KRP-structure.git}.

\textbf{Hardware description:} We perform all the timing experiments using MATLAB version 2023b on a machine with the following configuration:  CPU: Intel Xeon Gold 6226R @ 64x 2.9 GHz, RAM: 251 GB, SWAP: 8.0 GB, OS: Ubuntu 20.04.6 LTS. 

\subsection{Synthetic Data}  \label{sec:synthetic_tensor}
In our first experiment, we compare the speed and accuracy of fixed-rank TT-rounding algorithms on synthetic tensors with a known low-rank structure. 
Here we establish that for this problem, where no adaptivity is required since the ranks are known \textit{a priori}, the use of the proposed KRP structured random matrices compared to the TT-structured random matrices (as in the Randomize-then-Orthogonalize approach of \cite{al2023randomized}) has only minor effects on the time to solution and accuracy of the result. 

\paragraph{Description of problem setup} To generate a random tensor with a fixed target rank, we follow the procedure described in \cite[Section 4.1]{al2023randomized}. Specifically, we construct a random tensor $\Tn{X}$ by perturbing a lower-rank tensor $\Tn{Y}$ with a scaled random tensor $\epsilon \Tn{Z}$, as follows: $\Tn{X} = \Tn{Y} + \epsilon \Tn{Z}$. The tensors $\Tn{Y}, \Tn{Z} \in \mathbb{R}^{100 \times \cdots \times 100}$ are order-$d$ tensors with $d = 10$, and both are normalized random TT-tensors. The TT-ranks of $\Tn{Y}$ and $\Tn{Z}$ are set to $(1, 50, \ldots, 50, 1)$. We set the perturbation parameter to $\epsilon = 10^{-5}$ in our experiments. This construction results in a perturbed tensor $\Tn{X}$ with TT-ranks $(1, 100, \ldots, 100, 1)$. The perturbation parameter $\epsilon$ determines how well the tensor $\Tn{X}$ is approximated by the lower rank tensor $\Tn{Y}$, and a small value of $\epsilon$ ensures that $\Tn{X}$ remains close to $\Tn{Y}$.

\paragraph{Description of algorithmic settings} We compress the tensor $\Tn{X}$ using several algorithms to a low-rank tensor $\hat{\Tn{X}}$ with TT-ranks $(1, \ell, \ldots, \ell, 1)$. For convenience, we refer to Algorithm~\ref{alg:randomize_then_orthogonalize} as Rand-Orth, Algorithm~\ref{alg:orthogonalize_then_randomize} as Orth-Rand (Fix), Algorithm~\ref{alg:TT-Rounding} as TT-Rounding, and Algorithm~\ref{alg:krp_ttroudning_fixRank} as Rand-Orth-KRP (Fix).
In our experiments, we vary the target rank $\ell$ from $35$ to $80$ in increments of $5$. 

We summarize the approximation error and speedup results in Figure~\ref{fig:synthetic}. The approximation error is computed as the relative norm error as defined before, and the speedup is measured with respect to the deterministic TT-Rounding algorithm.

\begin{figure}[!ht]
    \centering
    \includegraphics[width=0.9\linewidth]{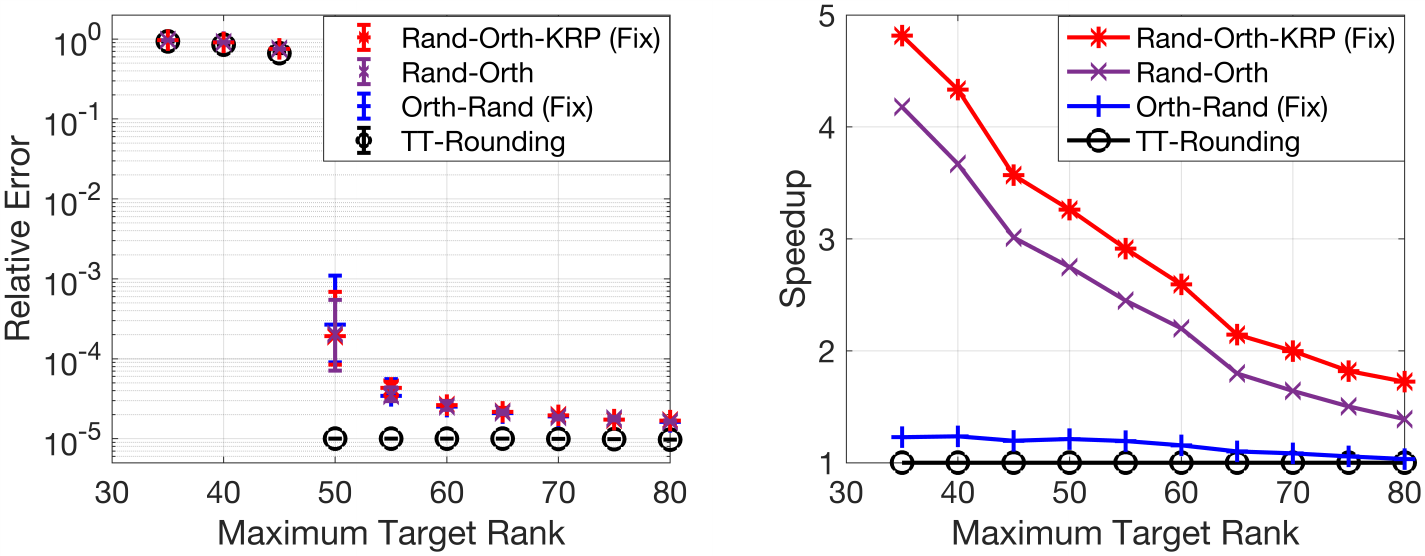}
    \caption{Left: Relative error over 10 runs on synthetic data. Right: Average speedup with respect to TT-Rounding algorithms over 10 runs.}
    \label{fig:synthetic}
\end{figure}

\paragraph{Reults and discussion} In Figure~\ref{fig:synthetic}, we can see that for $\ell < 50$, the relative error of all the algorithms is nearly identical and large. For $\ell \geq 50$, the relative error of deterministic TT-Rounding algorithms becomes close to  $10^{-5}$ as expected from the construction of $\Tn{X}$. The relative errors of all randomized algorithms, Rand-Orth, Orth-Rand (Fix), and Rand-Orth-KRP (Fix), are similar to each other but slightly higher than that of TT-Rounding. However, the randomized algorithms achieve significant speedups over TT-Rounding. The speedup of Rand-Orth-KRP (Fix) and Rand-Orth are comparable and are about $1.5$ to $4.8$ times faster than the TT-Rounding algorithm.

\subsection{Parametric low-rank kernel approximation} \label{ssec:matern}
Kernel matrices arise in many applications including machine learning, inverse problems, spatial statistics, etc. Kernel matrices are typically large and dense and computationally challenging to work with. To make it more challenging, the kernels depend on (hyper) parameters, so that the kernel matrices have to be computed repeatedly. 
In recent work~\cite{khan_parametric_2025} a method is proposed to construct a parametric low-rank kernel approximation that combines a polynomial approximation of the kernel with a TT compression of the coefficient matrix $\Tn{M}$. 
While~\cite{khan_parametric_2025} uses greedy-cross approximation to compress the tensor, the resulting tensor does not have optimal ranks. 
We propose to use the TT-rounding algorithms proposed in this paper. 
We show that for problems with a specified accuracy required (i.e., rank adaptivity), our proposed Algorithm~\ref{alg:krp_ttroudning_adapRank} computes approximations up to $10\times$ faster than deterministic and randomized alternatives while meeting the desired accuracy.

\paragraph{Description of problems settings} The specific problem setup is discussed in Section~\ref{ssec:app_matern}. 

\paragraph{Description of algorithmic settings} We compress the tensor $\Tn{X}$ at various accuracy levels using the following adaptive tolerance methods: Algorithm~\ref{alg:krp_ttroudning_adapRank} (denoted as {Rand-Orth-KRP (Adap)}), an adaptive version of the Orthogonalize-then-Randomize algorithm~\cite[Algorithm SM2.1]{al2023randomized} (denoted as {Orth-Rand (Adap)}), and the deterministic TT-rounding algorithm presented in Algorithm~\ref{alg:TT-Rounding}, referred to as TT-Rounding. In both Rand-Orth-KRP (Adap) and Orth-Rand (Adap), the enrichment of the orthonormal basis is guided by the Frobenius norm of the residual error matrix. Rand-Orth-KRP (Adap) uses an estimate of this norm rather than computing it exactly, due to the high computational cost of exact residual norm computation in this setting.
For Rand-Orth-KRP (Adap), we set $f_{\rm init} = 0.1$ and $f_{\rm inc} = 0.05$. In contrast, Orth-Rand (Adap) computes the Frobenius norm exactly, since the orthogonalization step enables the computation of the Frobenius norm with minimal additional work.

Since Rand-Orth-KRP (Adap) produces left-orthonormal compressed TT representations, excess ranks arising due to block size and residual norm estimation can be mitigated by applying a final compression pass to the orthonormal tensor cores, as suggested in Remark~\ref{rem:rounding_pass}. We refer to the resulting method as Adap-R.

We use the rank information from the compressed TT representations produced by the adaptive algorithms  to compress the Mat\'ern kernel tensor using their fixed-rank counterparts: {Orth-Rand-KRP (Fix)} (Algorithm~\ref{alg:krp_ttroudning_fixRank}) and Orth-Rand (Fix) (\cite[Algorithm SM2.1]{al2023randomized}).  Furthermore, we compress the Mat\'ern kernel tensor using Algorithm~\ref{alg:randomize_then_orthogonalize} (denoted as Rand-Orth) by utilizing the rank information of the compressed TT representation obtained using Rand-Orth-KRP (Adap) to compare its performance with Rand-Orth-KRP (Fix) under identical rank constraints. By giving an advantage to the fixed-rank approaches, this experiment will enable us to assess the trade-off in computational cost, particularly in runtime between the adaptive algorithms and their fixed-rank versions. 
\begin{figure}[!ht]
    \centering
    \includegraphics[width=0.98\linewidth]{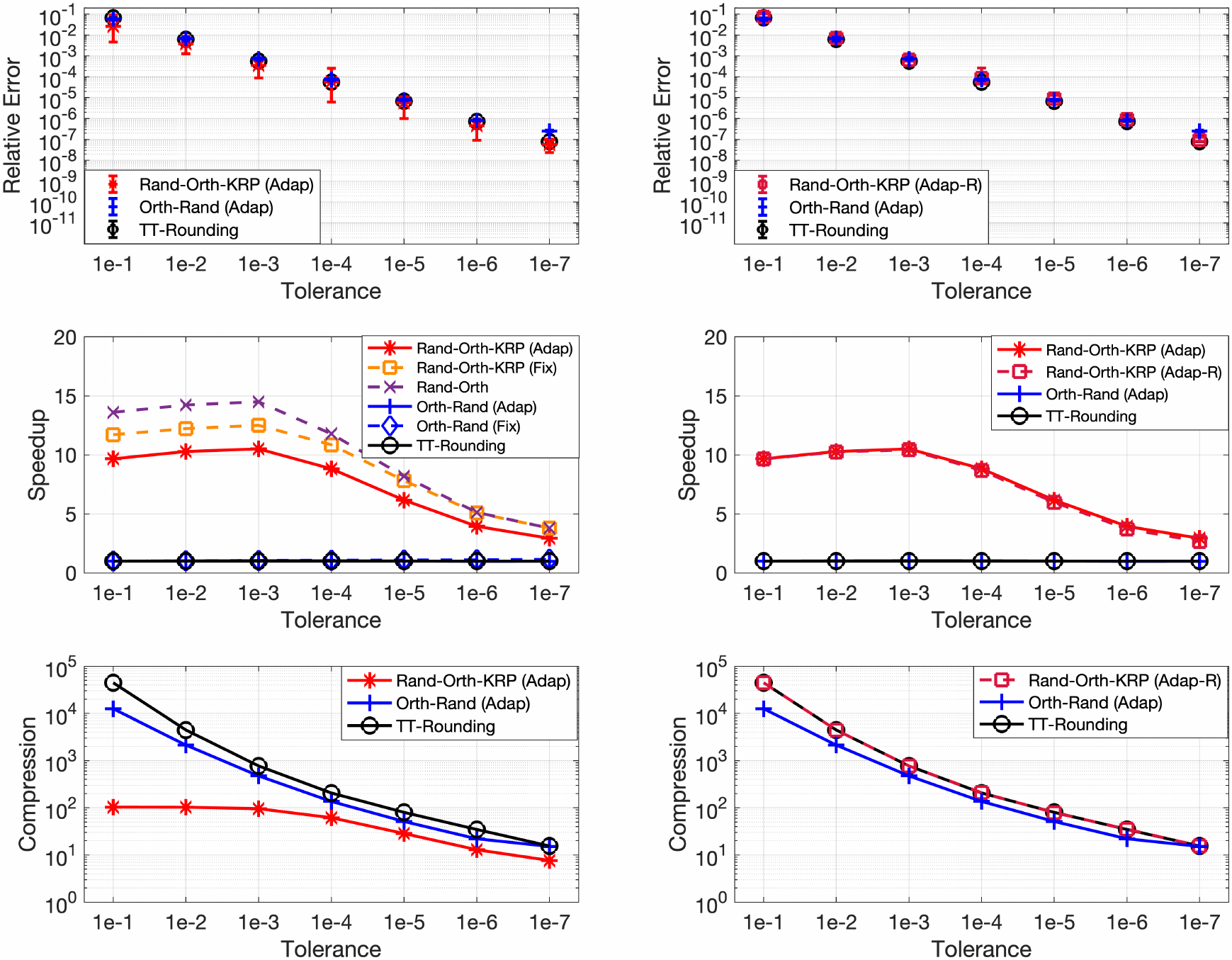}
   \caption{
Left column: Comparison of baseline methods on the Mat\'ern kernel data. 
Right column: Comparison after applying a rounding pass to the left-orthonormal TT tensor obtained from adaptive randomized TT rounding algorithms. 
Relative errors obtained over 10 independent runs. 
Average speedup is measured relative to the deterministic TT-Rounding algorithm. 
Compression is defined as the ratio of the number of parameters in the original tensor to that in the compressed tensor and is averaged over 10 runs. 
Here, {Adap} refers to adaptive-rank algorithms, {Fix} to their fixed-rank counterparts, and {Adap-R} to the adaptive-rank algorithm followed by a rounding pass on the left-orthonormal TT tensor.
}
    \label{fig:matern}
\end{figure}

\paragraph{Results and discussion} Figure~\ref{fig:matern} summarizes the experimental results obtained over 10 independent runs. In the upper-left panel of Figure~\ref{fig:matern}, the x-axis represents the desired accuracy level, while the y-axis shows the relative error between the compressed TT representation and the input TT tensor. We can see that the relative error achieved by Rand-Orth-KRP (Adap) is generally less than or equal to the desired accuracy. Only in a few cases, the error slightly exceeds the desired value but remains close. This behavior is likely due to the use of an estimate of the residual norm error. In contrast, the relative error of TT-Rounding closely matches the desired accuracy, as expected from a deterministic algorithm. Similarly, the relative error of  Orth-Rand (Adap) also closely matches the desired accuracy, since it computes the residual norm error exactly.

In the middle left panel of Figure~\ref{fig:matern}, we compare the runtime of the algorithms. Among the adaptive-rank methods, Rand-Orth-KRP (Adap) is the fastest, achieving up to $10\times$ speedup over both TT-Rounding and Orth-Rand (Adap). The Orth-Rand (Adap) algorithm shows no speed advantage over TT-Rounding because both involve a right-to-left orthogonalization pass, which dominates the overall computational cost. We can see that Rand-Orth-KRP (Fix) is slightly faster than Rand-Orth-KRP (Adap), as expected, since the adaptive variant includes additional orthogonalization (Line~\ref{line:ResSketch_orth} of Algorithm~\ref{alg:resdSketching}) and re-orthogonalization (Line~\ref{line:adap_reOrtho} of Algorithm~\ref{alg:krp_ttroudning_adapRank}) steps. However, the cost of these steps contributes only to lower-order terms in the overall complexity of Algorithm~\ref{alg:krp_ttroudning_adapRank}, and therefore does not significantly affect the performance of the adaptive-rank method compared to its fixed-rank counterpart, as evident from the middle-left subplot. Furthermore, we note that the speedups of Rand-Orth and Rand-Orth-KRP (Fix) are comparable across all desired accuracy levels.

The bottom left panel compares the average compression achieved by Rand-Orth-KRP (Adap), Orth-Rand (Adap), and TT-Rounding algorithms. Compression is defined as the ratio of the number of parameters in the original tensor to those in the compressed tensor. Among the three methods, TT-Rounding achieves the highest compression, closely followed by Orth-Rand (Adap). Rand-Orth-KRP (Adap) yields the lowest compression. This lower compression is due to the block-wise enrichment of the orthonormal basis and use of the residual norm estimate for stopping criteria,    which can increase TT-ranks beyond the minimal requirement.

The results after applying the rounding pass in Adap-R are summarized in the right column of the Figure~\ref{fig:matern}. Note that some of the results (TT-Rounding, Orth-Rand (Adap), and Rand-Orth-KRP (Adap) are repeated in the right panel to enable comparison). The three main takeaways are:  (1) the additional rounding pass ensures that the relative error is commensurate with the requested tolerance,   (2) the cost of the compression pass is negligible compared to the total runtime, as it operates on already compressed and orthogonalized cores, and (3) the additional compression results in lower ranks, and the compression ratios of Rand-Orth-KRP (Adap-R) is comparable with TT-Rounding.

\subsection{Parametric PDE in the TT format} \label{ssec:cookies}
We consider a parametric PDE usually referred to as the \textit{cookie problem}, \cite{kressner_low-rank_2011, tobler_christine_low-rank_2012}, in which the set of solutions is known to be low-rank. This PDE is discretized using finite differences and results in large linear system. We solve such a system using the TT-GMRES algorithm \cite{dolgov_tt-gmres_2012}. In TT-GMRES, the dominant computational bottlenecks occur during the repeated formation of linear combinations of TT-tensors—first in the operator application involving a sum over Kronecker products, and second during orthogonalization via Gram–Schmidt. Each of these steps requires a TT-rounding operation to control rank growth, making them critical to both the accuracy and efficiency of the overall method. 
We can exploit the structure of the input as a sum of TT tensors using randomization and call the operation Sum+Round (see Remark~\ref{rem:sum}). For further details on how this is done for the Rand-Orth algorithm, see \cite[Section 4.2]{al2023randomized}.

\paragraph{Description of problem setting}  The details can be found in Section~\ref{ssec:app_cookies}.
\begin{figure}[!ht]
    \centering
    \includegraphics[width=0.9\linewidth]{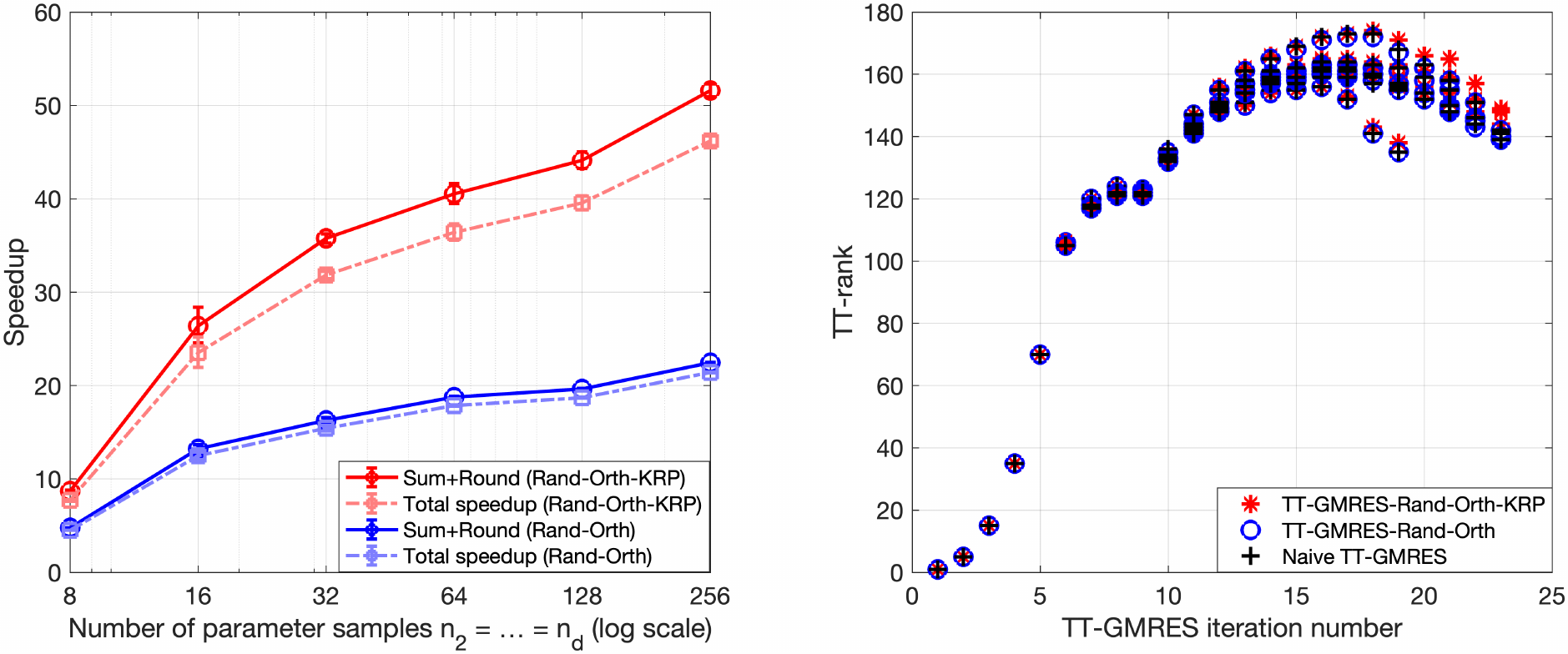}
    \caption{Left: Speedup relative to the naive TT-GMRES implementation on the parametric PDE  with 4 cookies. Right: Maximum TT-ranks at each iteration across 3 independent runs.}
    \label{fig:TTGMRES_speedup_rank}
\end{figure}
\paragraph{Description of algorithmic settings} For tensor summation followed by rounding within the preconditioned TT-GMRES solver, we compare three strategies: the naive deterministic implementation of preconditioned TT-GMRES, the randomized summation-then-orthogonalize approach from \cite{al2023randomized}, which we denote as {TT-GMRES-Rand-Orth}, and our proposed adaptive randomized method based on KRPs tailored to inputs that are sums of TT tensors (see Remark~\ref{rem:sum} and Algorithm~\ref{alg:krp_ttroudning_adapRank_ttsum}). We denote our proposal by {TT-GMRES-Rand-Orth-KRP}. In TT-GMRES-Rand-Orth-KRP, during the rounding of the sum of TT tensors (see Algorithm~\ref{alg:krp_ttroudning_adapRank_ttsum}), we set the initial block size for the $k$-th mode equal to the maximum $k$-th TT-rank among the summand tensors, and we set the enrichment factor $f_{\rm inc} = 0.05$. On the left-orthonormal compressed TT tensor obtained from Algorithm~\ref{alg:krp_ttroudning_adapRank_ttsum}, we perform a final deterministic rounding pass to reduce any excess rank that may arise due to the block size or the residual norm estimation (see Remark~\ref{rem:rounding_pass}). We set the relative tolerance in the TT-GMRES solver equal to  $10^{-8}$. All three solvers achieved similar relative residual errors, on the order of the specified solver tolerance.

\paragraph{Results and discussion} Figure~\ref{fig:TTGMRES_speedup_rank} summarizes the runtime speedup and the maximum TT-ranks per iteration for the different algorithms.
From the left panel in Figure~\ref{fig:TTGMRES_speedup_rank}, we observe that our method, TT-GMRES-Rand-Orth-KRP, is the fastest one. It achieves approximately a $51\times$ speedup in the summation-following-rounding operation relative to the naive TT-GMRES, and about a $46\times$ speedup in the overall TT-GMRES process. Furthermore, TT-GMRES-Rand-Orth-KRP is approximately $2\times$ faster than the previously best-performing method, TT-GMRES-Rand-Orth, proposed in~\cite{al2023randomized}. In the right panel, we see that the maximum TT-rank at each iteration remains nearly the same for all three algorithms across the three independent runs. This implies that our method maintains comparable rank growth behavior to the baseline algorithms while achieving significantly better speedup.

To assess the adaptivity of the TT-GMRES algorithms used in the above experiment, we perform an additional experiment on the cookie problem with a different configuration. We consider  $d = 8$ parameter dimensions and use a piecewise linear finite element discretization. The number of parameter samples is varied as $ n = n_2 = \cdots = n_d$, with values taken from the set $\{ 2^3, 2^4, \ldots, 2^8 \}$. The parameter values $\rho_i$ are chosen to be linearly spaced in the interval $[1, 5]$. We set the relative tolerance for the TT-GMRES solver to $10^{-8}$. Figure~\ref{fig:TTGMRES_error_new} presents a comparison of the relative residual error achieved by different algorithms. Figure~\ref{fig:TTGMRES_speedup_rank_new} summarizes the runtime and the maximum TT-ranks per iteration for the different algorithms. 
\begin{figure}[h]
    \centering
    \includegraphics[width=0.45\linewidth]{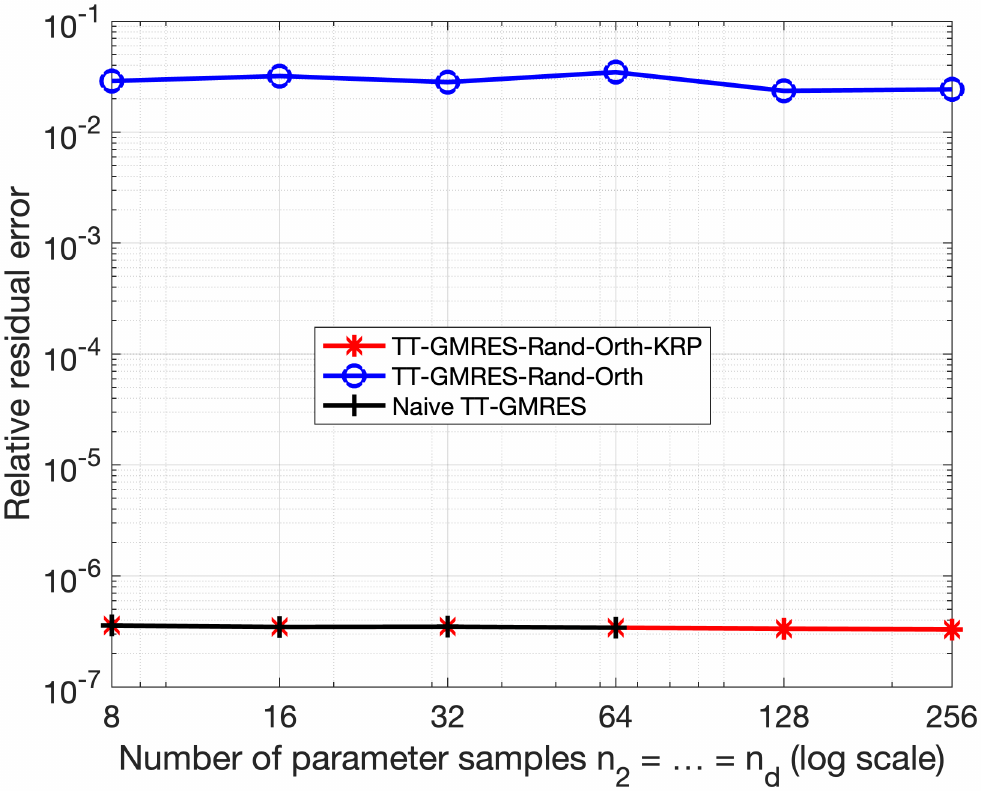}
    \caption{Average relative residual error achieved over 3 runs on the parametric PDE problem with 7 cookies. The na\"ive TT-GMRES approach encountered out-of-memory errors at sample sizes 128 and 256.}
    \label{fig:TTGMRES_error_new}
\end{figure}

\begin{figure}[h]
    \centering
    \includegraphics[width=0.92\linewidth]{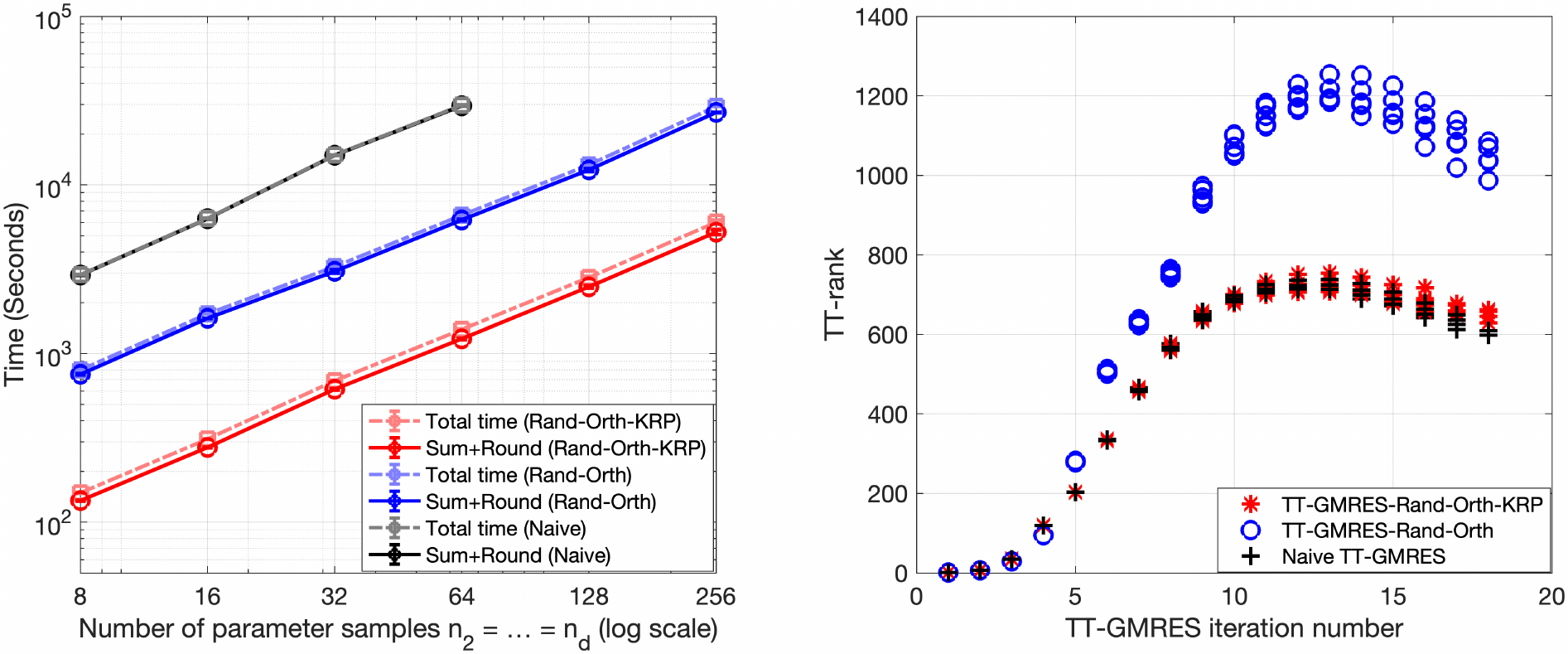}
    \caption{Left: Runtime comparison on the parametric PDE problem with 7 cookies. Right: Maximum TT-ranks at each iteration across 3 independent runs.}
    \label{fig:TTGMRES_speedup_rank_new}
\end{figure}

From Figure~\ref{fig:TTGMRES_error_new}, we observe that the TT-GMRES-Rand-Orth algorithm achieves a relative residual error of only $10^{-2}$, highlighting its inability to adapt effectively to the new dataset. In contrast, the randomized TT-GMRES-Orth-Rand-KRP and the naive TT-GMRES both achieve significantly lower residual errors, on the order of $10^{-7}$, which is much closer to the desired target of  $10^{-8}$. The slight deviation from the target accuracy may be due to the inexact Arnoldi procedure used in the GMRES process. In the left panel of Figure~\ref{fig:TTGMRES_speedup_rank_new}, we see that the na\"ive TT-GMRES method has the highest runtime and fails with out-of-memory errors at sample sizes 128 and 256. On the other hand, the randomized TT-GMRES-Orth-Rand-KRP method has the lowest runtime, approximately $25\times$ faster than the naive method and $5\times$ faster than the randomized TT-GMRES-Rand-Orth algorithm.
In the right panel,  we can see that the maximum TT-rank at each iteration remains nearly the same for the naive method and TT-GMRES-Rand-Orth-KRP algorithms, while the TT-GMRES-Orth-Rand algorithm consistently has significantly higher TT-ranks across the three independent runs. These results demonstrate that the TT-GMRES-Rand-Orth-KRP method preserves the rank behavior of the naive approach while delivering notable computational savings, making it a more practical and scalable alternative for large-scale problems.

\section{Conclusion} \label{sec:conclusion}
We introduce a new adaptive randomized TT-rounding algorithm where sketching employs matrices with KRP structure.
Underpinned with statistical analysis of the error estimation to provide adaptiveness, the proposed algorithm demonstrates comparable accuracy compared to the deterministic TT-SVD algorithm while achieving up to $50\times$ speedup on a number of test cases arising from synthetic data, parameter-dependent PDE, and parameter-dependent kernel matrices. 
Compared to previously proposed randomized fixed-rank algorithms, a slight overhead is required in order to guarantee a satisfactory accuracy as demonstrated in our numerical experiments.

We observe a huge benefit from using the randomized TT-rounding in order to compress a sum of TT tensors without assembling the sum prior to compression. 
This avoids a massive amount of data movement and therefore results in a fast fused-summation-and-compression algorithm allowing solvers such as TT-GMRES to be practical for tensor equations.

Another observation regarding sketching using matrices with KRP structure is that a significant part of computing the sketches can be performed in parallel with respect to the tensor cores. This can potentially have a significant impact on the derivation of parallel algorithms for TT rounding especially with the intrinsic sequential paradigm of TT-SVD to keep the complexity linear in the number of modes. We aim to address this in future work. Another area of future work is to extend our approach to exploit the structure of MPO-MPS compression as in~\cite{camano_successive_2025}.

\section*{Acknowledgments}
We would like to acknowledge the help of Abraham Khan in generating the test problem in Section~\ref{ssec:matern}.

\appendix
\section{Overview of Appendices}
In these supplementary materials, we provide more details about three algorithms and two application problems that appear in the numerical experiments of the main paper.
We discuss the pre-existing Orth-Rand and Rand-Orth algorithms for rounding a single TT input in Section~\ref{sec:otr} and Section~\ref{sec:rto} respectively, and how our proposed algorithm using KRP sketches can be tailored to sum of TT inputs in Section~\ref{sec:sum}.
The details of the problem setups are given in Section~\ref{sec:probsetup}.

\section{Orthogonalize then Randomize (Orth-Rand)}\label{sec:otr}

The Orthogonalize-then-Randomize algorithm is introduced in \cite{al2023randomized}. As the name suggests, it consists of two phases: an orthogonalization phase followed by the randomization phase. Let $\Tn{X}$ be the input TT tensor which we want to compress into tensor $\Tn{Y}$. The orthogonalization phase of the Orth-Rand algorithm is similar to the orthogonalization phase of deterministic TT rounding  (Algorithm~\ref{alg:TT-Rounding}), where we make the horizontal unfoldings of TT cores orthonormal starting from the rightmost core to the left, to obtain a right orthonormal tensor $\Tn{Y}$ which is equivalent to $\Tn{X}$.  See Section~\ref{sec:tt_rounding} for a detailed discussion on the orthogonalization phase. During the randomization phase, we process the TT cores of $\Tn{Y}$ from left to right, excluding the last core. For each vertical unfolding $\v{\Tn{Y}_k}$, we apply the randomized range finder to get orthonormal basis $\Mx{Q}_k \in \R^{\ell_{k-1} n_k \times \ell_k}$ such that
$$
\v{\Tn{Y}_k} \approx  \Mx{Q}_k \Mx{Q}_k^T \v{\Tn{Y}_k}
$$
We then set $\v{\Tn{Y}_k} = \Mx{Q}_k$ and $\h{\Tn{Y}_{k+1}} = \Mx{Q}_k \Mx{Q}_k^T \h{\Tn{Y}_{k+1}}$. For more details on the randomization phase, refer to \cite{al2023randomized}. 
The detailed pseudocode is given in Algorithm~\ref{alg:orthogonalize_then_randomize}.
This algorithm can be made adaptive to a prescribed error tolerance by applying a standard adaptive randomized low-rank matrix algorithm as described in \cite[Section SM2]{al2023randomized}.

\begin{algorithm}[!ht]
    \caption{TT-Rounding:Orthogonalize-then-Randomize \cite{al2023randomized}}
    \label{alg:orthogonalize_then_randomize}
  
\begin{algorithmic}[1]
    \Require A tensor $\Tn{X} = [ \Tn{X}_1, \ldots, \Tn{X}_d ]$ with TT-ranks $\{r_k\}$, target ranks$\{\ell_k\}$
    \Ensure A tensor $\Tn{Y} = [\Tn{Y}_1, \ldots, \Tn{Y}_d ]$  with TT-ranks $\{\ell_k\}$
    \Function{$\Tn{Y}$=TT-Rounding-OrthRand}{$\Tn{X}$, $\{\ell_k\}$}
    \State $\Tn{Y}$ = \textproc{OrthogonalizeRL}($\Tn{X}$)
    % \State $\Tn{Y}_1 = \Tn{X}_1$
    \For{$k=1$ to $d-1$ } 
    \State $\Mx{Z}_k$ = $\v{\Tn{Y}_k}$ \Comment{$\Tn{Y}_k$ is $\ell_{k-1} \times n_k \times r_k$}
    \State $\Mx{S}_k= \Mx{Z}_k \Mx{\Omega}_k$ \Comment{form sketched matrix $n_k \ell_{k-1} \times \ell_k$}
    \State [$\v{\Tn{Y}_k}$, $\sim$] = \texttt{QR}($\Mx{S}_k$) \Comment{reduced {QR} decomposition}
    \State $\Mx{M}_k  = \v{\Tn{Y}_k}^T \Mx{Z}_k$ \Comment{form $\ell_k \times r_k$ matrix} 
    \State $\h{\Tn{Y}_{k+1}} = \Mx{M}_k \h{\Tn{Y}_{k+1}}$ \Comment{$\Tn{Y}_{k+1} = \Tn{Y}_{k+1} \times_1 \Mx{M}_k$}
    \EndFor 
    \EndFunction
\end{algorithmic}

\end{algorithm}

\section{Randomize-then-Orthogonalize (Rand-Orth)}
\label{sec:rto}

Here, we present an overview of the Rand-Orth algorithm by \cite{al2023randomized}. The main idea of the Rand-Orth algorithm is to avoid the computationally expensive orthogonalization of the original TT cores by leveraging randomization techniques.  It replaces the costly orthogonalization step in line~\ref{line:tt_rounding_orth_step} of Algorithm~\ref{alg:TT-Rounding} with randomized sketching to reduce the computational overhead.
As the name suggests, the method consists of two phases: a randomization phase and an orthogonalization phase. The randomization phase involves the computation of partial contractions (also known as partial projections or sketches), Algorithm~\ref{alg:partialContractionRL_TT}. In this phase, we first generate a random Gaussian TT-tensor~\cite{al2023randomized} $\Tn{R}\in \mathbb{R}^{n_1 \times \cdots \times n_d}$, whose cores $\Tn{R}_{k}$, are filled with independent, normally distributed entries with zero mean and variance $1/(\ell_{k-1} n_k \ell_k)$, for $1 \leq k \leq d$.

Using this randomized tensor $\Tn{R}$,  we compute the partial contractions of the input tensor $\Tn{X}$ by taking a single pass over its TT-cores, either from left-to-right or from right-to-left. Algorithm~\ref{alg:partialContractionRL_TT} presents a right-to-left contraction; a left-to-right variant can be obtained by reversing the order of operations in the same procedure.
\begin{algorithm}[!ht]
  \caption{Right-to-Left Contraction \cite{al2023randomized}}
  \label{alg:partialContractionRL_TT}
  \begin{algorithmic}[1]
    \Require TT-tensors $\Tn{X} = [\Tn{X}_1, \ldots, \Tn{X}_d]$ and $\Tn{Y} = [ \Tn{Y}_1, \ldots, \Tn{Y}_d]$  with TT-ranks $\{r_k\}$ and $\{\ell_k\}$
    \Ensure Matrices $\{\Mx{W}_{k}\}$ satisfy $\Mx{W}_{k} = \h{\Tn{X}_{k+1:d}} \h{\Tn{Y}_{k+1:d}}^T$
    \Function{[\{$\Mx{W}_{k}$\}]=PartialContractionsRL}{$\Tn{X}$, $\Tn{Y}$}
    \State $\Mx{W}_{d-1} = \h{\Tn{X}_d} \h{\Tn{Y}_d}^T$
    \For{$k=d-1$ to $2$ }
    \State $\v{\Tn{Z}_k} = \v{\Tn{X}_k} \Mx{W}_k$ \Comment{$\Tn{Z}_{k} = \Tn{X}_k \times_3 \Mx{W}_k$}
    \State $\Mx{W}_{k-1} = \h{\Tn{Z}_k} \h{\Tn{Y}_k}^T$ \Comment{$\Mx{W}_{k-1}$ is $r_{k-1} \times \ell_{k-1}$}
    \EndFor
    \EndFunction
\end{algorithmic}

\end{algorithm}
In the orthogonalization phase, the TT-cores are traversed in the direction opposite to that used during the randomization phase. If the randomization phase proceeds from right-to-left, the orthogonalization is performed from left-to-right, and vice versa. This reverse traversal facilitates the efficient computation of the final sketches on which the orthogonalization is performed, ultimately producing a compressed left-orthogonal (right-orthogonal) TT-tensor. Starting with $k = 1$ and initializing $\Tn{Y}_1 = \Tn{X}_1$, we proceed with the computation of the QR factorization of the sketched matrix 
\begin{align}
     \v{\Tn{Y}_k} \h{\Tn{X}_{k+1:d}} \h{\Tn{R}_{k+1:d}}^T = \v{\Tn{Y}_k} \Mx{W}_k = \Mx{Q}_k \Mx{R}_k.
\end{align}
At the $k$-th step, the first $k - 1$ cores of $\Tn{Y}$ have already been orthogonalized and, therefore, do not need to be considered explicitly in the subsequent factorization. We approximate the product of the final $d-k+1$ core by projecting the product $\v{\Tn{Y}_k} \h{\Tn{X}_{k+1:d}}$ on the column space of $\Mx{Q}_k$ as 
\begin{align}
\v{\Tn{Y}_k} \h{\Tn{X}_{k+1:d}} \approx \Mx{Q}_k \Mx{Q}_k^T  \v{\Tn{Y}_k} \h{\Tn{X}_{k+1:d}} = \Mx{Q}_k \Mx{M}_k \h{\Tn{X}_{k+1:d}}.
\end{align}
 where $\Mx{M}_k$ is the resulting factor from the projection. The TT-cores are then updated accordingly: $\v{\Tn{Y}_k} \leftarrow \Mx{Q}_k$, and $\h{\Tn{Y}_{k+1}} \leftarrow \Mx{M}_k \h{\Tn{X}_{k+1}}$. A complete pseudo-code of this procedure is provided in Algorithm~\ref{alg:randomize_then_orthogonalize}, and for more details refer to \cite{al2023randomized}.
\begin{algorithm}
    \caption{TT-Rounding: Randomize-then-Orthogonalize \cite{al2023randomized}}
    \label{alg:randomize_then_orthogonalize}
  \begin{algorithmic}[1]
    \Require A tensor $\Tn{X} = [\Tn{X}_1, \ldots, \Tn{X}_d]$ with TT-ranks $\{r_k\}$, target ranks$\{\ell_k\}$
    \Ensure A tensor $\Tn{Y} =  [\Tn{Y}_1, \ldots, \Tn{Y}_d]$  with TT-ranks $\{\ell_k\}$
    \Function{$\Tn{Y}$=TT-Rounding-RandOrth}{$\Tn{X}$, $\{\ell_k\}$}
    \State Select a random Gaussian TT-tensor $\Tn{R}$ with target ranks $\{\ell_k\}$
    \State $\{\Mx{W}_k\}$ = \textproc{PartialContractionsRL}($\Tn{X}$, $\Tn{R}$)
    \State $\Tn{Y}_1 = \Tn{X}_1$
    \For{$k=1$ to $d-1$ } \label{line:comp_start}
    \State $\Mx{Z}_k$ = $\v{\Tn{Y}_k}$ \Comment{$\Tn{Y}_k$ is $\ell_{k-1} \times n_k \times r_k$}
    \State $\Mx{S}_k= \Mx{Z}_k \Mx{W}_k$ \Comment{form sketched matrix $n_k \ell_{k-1} \times \ell_k$}
    \State [$\v{\Tn{Y}_k}$, $\sim$] = \texttt{QR}($\Mx{S}_k$) \Comment{reduced QR decomposition}
    \State $\Mx{M}_k  = \v{\Tn{Y}_k}^T \Mx{Z}_k$ \Comment{form $\ell_k \times r_k$ matrix} 
    \State $\h{\Tn{Y}_{k+1}} = \Mx{M}_k \h{\Tn{X}_{k+1}}$ \Comment{$\Tn{Y}_{k+1} = \Tn{X}_{k+1} \times_1 \Mx{M}_k$}
    \EndFor \label{line:comp_end}
    \EndFunction
\end{algorithmic}

\end{algorithm}

\section{Rounding of the sum of TT Tensor}\label{sec:sum}

The summation of TT tensors is a common operation in TT arithmetic, and it inherently increases the ranks of the resulting tensor. Hence, it often relies on TT rounding, where the goal is to compress a tensor $\Tn{X}$ that is given as a sum of $s$ TT tensors: $\Tn{X} = \Tn{X}^{(1)} + \cdots + \Tn{X}^{(s)}$. A naive approach to compressing the sum of TT tensors is to first form their formal sum, resulting in a TT tensor $\Tn{X}$ with ranks equal to the sums of the individual TT ranks and cores exhibiting a sparse block structure. Then round $\Tn{X}$ using rounding algorithms. However, this method is computationally expensive, exhibiting cubic scaling in the number of summands $s$ when using classical TT rounding (Algorithm~\ref{alg:TT-Rounding}), and quadratic scaling for randomized methods (Algorithms~\ref{alg:randomize_then_orthogonalize},~\ref{alg:krp_ttroudning_fixRank},~\ref{alg:krp_ttroudning_adapRank}). As $s$ grows large, this becomes computationally infeasible in practice.

To handle this, we can exploit the property of the randomized sketching algorithm that the sketch of the sum is the sum of the sketches as suggested in \cite{al2023randomized}. That is, we can sketch each TT summand individually using Algorithm~\ref{alg:partialContractionRL_KRP} and then accumulate the result efficiently. We omit the details of this case because it follows the approach described in \cite[Section 3.3]{al2023randomized}; the only difference for the fixed-rank case is in how the partial contractions are computed to exploit the KRP structure.

This idea naturally extends to the adaptive-rank variant (Algorithm~\ref{alg:krp_ttroudning_adapRank}), which aims to maintain a prescribed relative error when the target TT ranks are not known a priori.  The detailed pseudocode of this extension is presented in the Algorithm~\ref{alg:krp_ttroudning_adapRank_ttsum}.

\begin{algorithm}[!ht]
    \caption{Residual sketching for basis expansion for the sum of TT tensor using Khatri–Rao projections.}
     \label{alg:resdSketching_ttsum}
    \begin{algorithmic}[1]
    \Require TT tensors $\{\Tn{X}^{(i)}\}_{i=1}^s$, matrix $\Mx{Z} \in \R^{\ell_{k-1} n_k \times \sum_{i=1}^s r_{k}^{(i)}}$, basis matrix $\Mx{Q}$, partial contractions $\{\Mx{W}_{j}^{(i)}\}_{\substack{j=1, \ldots,d\\i = 1, \ldots, s}}$,  TT core index $k$ and block size $b$
    \Ensure Residual sketch $\Mx{S}$ and partial contraction matrices $\{\Mx{W}_{j}^{(i)}\}_{\substack{j=1, \ldots,d\\i = 1, \ldots, s}}$ appended with new columns 
    \Function{[$\Mx{S}$, $\{\Mx{W}_j^{(i)}\}$]=GenerateResidualSketch-TTSum}{$\{\Tn{X}^{(i)}\}$, $\Mx{Z}$, $\Mx{Q}$, $\{\Mx{W}_{j}^{(i)}\}$, $k$, $b$}
    \If {$\col{\Mx{W}_{k+1}^{(1)}} < \col{\Mx{Q}} + b$} \Comment{$\col{\cdot}$ returns  number of columns} 
        \State Select random matrices $\Mx{\Omega}_{k+1}, \ldots, \Mx{\Omega}_d$ of size $n_j \times (b - \col{\Mx{W}_{k+1}^{(1)}})$ for \hspace*{9.5cm} $j =\crly{k+1, \ldots, d}$ 
        \For{$i = 1$ to $s$}
            \State $\{\overline{\Mx{W}}_j^{(i)}\}$ = \textproc{KRP-PartialContractionsRL}($\Tn{X}^{(i)}$, $[\Mx{\Omega}_{k+1}, \ldots, \Mx{\Omega}_d]$)
             \For {$j = k+1$ to $d$}
                \State $\Mx{W}_{j}^{(i)} = \bmat{\Mx{W}_j^{(i)} &  \overline{\Mx{W}}_{j}^{(i)}}$  \Comment{append columns}
            \EndFor
        \EndFor
    \EndIf
        \State  $\Mx{S}$ = $\Mx{Z}
       \begin{bmatrix}
        \Mx{W}_{k+1}^{(1)}\left(:,\col{\Mx{Q}}+1: \col{\Mx{Q}} +b \right)\\
        \vdots\\
        \Mx{W}_{k+1}^{(s)}\left(:,\col{\Mx{Q}}+1: \col{\Mx{Q}} +b \right)
       \end{bmatrix}$
        \State $\Mx{S} = \Mx{S} - \Mx{Q} \left(\Mx{Q}^T \Mx{S} \right)$  \label{line:ResSketch_ttsum_orth} \Comment{orthogonalization of $\Mx{S}$}
    \EndFunction
\end{algorithmic}

\end{algorithm}

\begin{algorithm}[!ht]
    \caption{Adaptive Rank TT-Rounding for sum of TT tensors: Randomize-then-Orthogonalize using KRP}
     \label{alg:krp_ttroudning_adapRank_ttsum}
  \begin{algorithmic}[1]
   \small
    \Require TT-tensors $\{\Tn{X}^{(i)}\}_{i=1}^s$ with TT-ranks $\{r_k^{(i)}\}$, tolerance $\varepsilon$,  $0 < f_{inc} < 1$ to set increment block size
    \Ensure A tensor $\Tn{Y}= [ \Tn{Y}_1, \ldots, \Tn{Y}_d ]$  with $\|\Tn{X} - \Tn{Y}\|/\|\Tn{X}\| \leq \varepsilon$ with high probability.
   
    \Function{$\Tn{Y}$=TT-Rounding-RandOrth-KRP-Adaptive}{$\{\Tn{X}^{(i)}\}$, $\varepsilon$, $f_{inc}$}
    % \State $\tau = (\varepsilon \times normX)/\sqrt{d-1}$
    \State Select random Gaussian matrices $\Mx{\Omega}_2, \ldots, \Mx{\Omega}_d$ of size $n_k \times r$ for $1 \leq k \leq d-1$ where $r = \max(r_{j}^{(i)})$ for $1\leq j \leq d-1$ and $1\leq i \leq s$ 
    \For{i = 1 to s}
    \State \{$\Mx{W}_k^{(i)}$\} = \textproc{KRP-PartialContractionsRL}($\Tn{X}^{(i)}$, $[\Mx{\Omega}_2, \ldots, \Mx{\Omega}_d]$)  
    % \Comment{Compute partial contraction from right}
    \EndFor    
    \State $\Tn{Y}_1$ = $\begin{bmatrix}
        \Tn{X}_1^{(1)} & \cdots & \Tn{X}_1^{(s)} 
    \end{bmatrix}$
    \State $\Mx{S} = \v{\Tn{Y}_{1}} \begin{bmatrix}
        \Mx{W}_1^{(1)} \\ \vdots \\ \Mx{W}_1^{(s)} 
    \end{bmatrix}$
     \State $\tau = \varepsilon \cdot \left(\|\Mx{S}\|_F/\sqrt{r} \right)/\sqrt{d-1}$
    \For{$k=1$ to $d-1$ }
        % \State $\Mx{Z}_k$ = $\v{\Tn{Y}_{k}}$ \Comment{$\Tn{Y}_{k}$ is $\ell_{k-1} \times n_k \times \sum_{i=1}^s r_k^{(i)}$} \label{line:adp_step1_start_ttsum}
        % \State $\maxRank = \min(\size{\Mx{Z}_k})$
        \State $b_{init}^{(k)} =\max(\{r_{k}^{(i)}\}_{i=1}^s)$ \Comment{initial block size}
       \State $\Mx{S}_k = \v{\Tn{Y}_k }\begin{bmatrix}
        \Mx{W}_k^{(1)}(:,1:b_{init}^{(k)}) \\ \vdots \\ \Mx{W}_k^{(s)} (:,1:b_{init}^{(k)})
    \end{bmatrix}$
        \State $[\Mx{Q}_k,\sim]$ = $\texttt{QR}(\Mx{S}_k)$  \Comment{reduced QR decomposition}
        \State $\begin{bmatrix}
            \Mx{M}_k^{(1)}& \ldots & \Mx{M}_k^{(s)}
        \end{bmatrix} = \Mx{Q}_k^T \v{\Tn{Y}_{k}}$ 
        \State $\begin{bmatrix}
            \Mx{Y}^{(1)}, \ldots, \Mx{Y}^{(s)}
        \end{bmatrix} =
        \begin{bmatrix}
            \Mx{M}_k^{(1)} \h{\Tn{X}_{k+1}^{(1)}}, \ldots, \Mx{M}_k^{(s)} \h{\Tn{X}_{k+1}^{(s)}}  
        \end{bmatrix}$  \label{line:adp_step1_end_ttsum}
        \State $\h{\Tn{Y}_{k+1}} = \begin{bmatrix}\Mx{Y}^{(1)}& \ldots & \Mx{Y}^{(s)}\end{bmatrix}$
        \State $b_{inc}^{(k)} = \lceil \bar{r}_k \cdot f_{inc} \rceil$  \Comment{incremental block size, here $\bar{r}_k = \min(\size{\v{\Tn{Y}_k}})$}
        % \If {$b_{inc}^{(k)} == 0$}
        % \State continue
        % \EndIf
        \State [$\Mx{S}_k$, $\{\Mx{W}_j^{(i)}\}$] = \textproc{GenerateResidualSketch-TTSum}($\{\Tn{X}^{(i)}\}$, $\v{\Tn{Y}_k}$, $\Mx{Q}_k$, $\{\Mx{W}_j^{(i)}\}$, $k$, $b_{inc}^{(k)}$)
        \While {$\|\Mx{S}_{k}\|_F/\sqrt{b_{inc}^{(k)}} \, > \, \tau$}  %\Comment{$\texttt{norm}(\Mx{S}_k)$ is Frobenius norm of $\Mx{S}_k$}
                \State $[\Mx{Q}_k^{new},\sim] = \texttt{QR}(\Mx{S}_k)$  \Comment{reduced QR decomposition}
                \State $\Mx{Q}_k^{new} = \texttt{QR}\left(\Mx{Q}_k^{new} - \Mx{Q}_k \left(\Mx{Q}_k^T \Mx{Q}_k^{new} \right) \right)$ \label{line:adap_reOrtho_ttsum} \Comment{re-orthogonalization }
                \State $\Mx{Q}_k= \bmat{\Mx{Q}_k & \Mx{Q}_k^{new}}$ \Comment{add new orthonormal directions}
                \State $\begin{bmatrix}
                    \Mx{M}_k^{(1)}& \ldots & \Mx{M}_k^{(s)}
                \end{bmatrix} = \left(\mathbf{Q}_k^{new}\right)^T \v{\Tn{Y}_{k}} $ 
                \State $\begin{bmatrix}
                    \Mx{Y}^{(1)}, \ldots, \Mx{Y}^{(s)}
                \end{bmatrix} = 
                \begin{bmatrix}
                   \big[\Mx{Y}^{(1)}; \Mx{M}_k^{(1)} \h{\Tn{X}_{k+1}^{(1)}}\big],   \ldots,  \big[\Mx{Y}^{(s)}; \Mx{M}_k^{(s)} \h{\Tn{X}_{k+1}^{(s)}} \big]  
                \end{bmatrix}$  
                \State $\h{\Tn{Y}_{k+1}} = \begin{bmatrix} \Mx{Y}^{(1)} & \ldots & \Mx{Y}^{(s)} \end{bmatrix}$
                % \State $b_{inc}^{(k)} = \min{(b_{inc}^{(k)}, \maxRank - \col{\v{\Tn{Y}_k}}}$
                % \If {$b_{inc}^{(k)} == 0$}
                %  \State   break
                % \EndIf
        \State [$\Mx{S}_k$, $\{\Mx{W}_j^{(i)}\}$] = \textproc{GenerateResidualSketch-TTSum}($\{\Tn{X}^{(i)}\}$, $\v{\Tn{Y}_k}$, $\Mx{Q}_k$, $\{\Mx{W}_j^{(i)}\}$, $k$, $b_{inc}^{(k)}$)
        \EndWhile
        \State $\v{\Tn{Y}_{k}} = \Mx{Q}_{k}$
    \EndFor
    \EndFunction
\end{algorithmic}
\end{algorithm}

\section{Problem setup}\label{sec:probsetup}
In this section, we give additional details of the problem setups in the Numerical Experiments (Section~\ref{sec:experiments}). 
\subsection{Problem setup: Parametric kernel low-rank approximation}\label{ssec:app_matern}
In this section, we give some of the details of the problem setup in Section~\ref{ssec:matern}. 

Given two sets of points in \(\mathbb{R}^3\), \( X = \{ x_1, \ldots, x_{I_s} \} \) and \( Y = \{ y_1, \ldots, y_{I_t} \} \), called source and target points respectively, and a kernel function \(\kappa (\cdot, \cdot; \theta) : \mathbb{R}^3 \times \mathbb{R}^3  \to \mathbb{R}\), a kernel matrix \( K(X, Y;\theta) \in \mathbb{R}^{I_s \times I_t} \) has entries
\[
[K(X,Y;\theta)]_{i,j} = \kappa(x_i, y_j; \theta) \qquad 1 \le i \le I_s, 1 \le j \le I_t.
\]
The (hyper)parameters $\theta \in \mathbb{R}^p$ control the behavior of the kernel. 

In this application, we focus on the Mat\'ern kernel. Given parameters \(\theta = (\ell, \nu)\), where \(\ell\) is the length scale and \(\nu\) controls the smoothness of the kernel, the Mat\'ern kernel function is defined as
\[
\kappa(x, y; \theta) = \frac{2^{1-\nu}}{\Gamma(\nu)} \left( \sqrt{2 \nu} \frac{r}{\ell} \right)^\nu \mathcal{K}_\nu \left( \sqrt{2 \nu} \frac{r}{\ell} \right),
\]
where \( \mathcal{K}_\nu \) is the modified Bessel function of the second kind and $r= \|x-y\|_2$. This kernel is an example of an isotropic kernel.

We consider the kernel function \(\kappa(x, y; \theta)\) defined over a source box \(  C_s = [0, 1]^3 \) and a target box \( C_t = [2, 3]^3 \), where the source and target point sets \( X \subseteq C_s \) and \( Y \subseteq C_t \). The kernel parameters \(\theta = (\ell, \nu)\) vary over a 2-dimensional parameter space, with the length scale \(\ell\) sampled over the interval \([c \cdot D_b, D_b]\) and the smoothness \(\nu\) over \([0.5, 3]\). Here, 
\(
D_b = \| (b,b,b) - (0,0,0) \|_2 = \sqrt{3} \, b \approx 0.173,
\)
where \( b = 0.1 \) defines the extent of the parameter domain, and \( c = 0.5 \) controls the lower bound of \(\ell\). Thus, the kernel (hyper)parameters satisfy $\theta \in \Theta \subset  C_\theta $, where $C_\theta= [c \cdot D_b, D_b] \times [0.5, 3]$.

In this application, we only focus on the compression of the coefficient tensor $\Tn{M}$, as defined in~\cite[Section 2.4, below equation (2.8)]{khan_parametric_2025}. Essentially, it involves the kernel evaluations at Chebyshev grid points of the first kind on domain $C_s \times C_\theta \times C_t$. Note that the compression of $\Tn{M}$ is an important step in PTTK that affects many downstream computations. The respective domains $C_s, C_t, C_\theta$ are discretized using \( n = 100 \) Chebyshev nodes of the first kind in each spatial dimension. This results in an 8-way coefficient tensor $\Tn{M}$ (3 each for spatial variables $x$ and $y$, and $2$ for the parameter variables) with each mode size $n=100$. 
This high-dimensional tensor is then compressed using a greedy TT-cross approximation with a relative Chebyshev norm tolerance of \(10^{-12}\), resulting in a compact tensor-train (TT) representation of the parameterized Mat\'ern kernel. We call the resultant tensor-train representation the {Mat\'ern kernel tensor} $\Tn{X}$ and use it as input to the baseline algorithms in our experimental study.

\subsection{Problem Setup: Parametric PDE in the TT format}\label{ssec:app_cookies}

Consider a square domain $\Omega$ with $d$ mutually disjoint discs $D_{i}$ aligned on a grid. Solving the following PDE for all parameter values $\boldsymbol{\rho} = \{ \rho_{i} \}_{i = 1}^{d}$ simultaneously is a $d+2$ dimensional problem

\begin{align*}
    -\text{div}( \sigma(x, y; \boldsymbol{\rho} ) \nabla( u(x, y; \boldsymbol{\rho}) ) &= f(x,y) & \text{ in  } \Omega \\
    u(x, y; \boldsymbol{\rho}) &= 0 & \text{ on  } \delta \Omega,
\end{align*}
where, for $1 \le i \le d$,
\begin{align*}
    \sigma(x, y; \boldsymbol{\rho}) = \begin{cases}
        1 + \rho_{i} & (x,y) \in D_{i} \\
        1 & \text{ elsewhere}.
    \end{cases}
\end{align*}

Let $\Mx{A}_{1,1}$ be the operator discretized over the domain with constant parameter values, for $2 \leq i \leq d$, $\Mx{A}_{i,1}$ the discretization of the operator over the domain times the characteristic function corresponding to the subdomain, $\Mx{A}_{i,i}$ the diagonal matrix with parameter values, $\Mx{A}_{i,j}$ the identity matrix for $j \neq i$. Then we can consider the linear system encapsulating the parametric PDE

\[ \left( \sum_{i = 1}^{d} \Mx{A}_{i,1} \otimes \ldots \otimes \Mx{A}_{i, d} \right) \Tn{U} = \Tn{F},\]
where $\Tn{U}$ is the discretized solution tensor representing $u(x, y; \boldsymbol{\rho})$ over the spatial and parameter domains, and $\Tn{F}$ is the discretized right-hand side tensor representing $f(x,y)$.

In our numerical experiments, we follow the setup from \cite[Section 4.2]{al2023randomized} with $d = 5$ parameter dimensions and a piecewise linear finite element discretization. We vary the number of parameter samples $n = n_2 = \cdots = n_d$ over the values $\{ 2^3, 2^4, \ldots, 2^8 \}$, with $\rho_i$ values linearly spaced between 1 and 10.

\bibliographystyle{siam}
\bibliography{reference}
\end{document}